\newtheorem{thm}{Theorem}[section]
\newtheorem{lem}[thm]{Lemma}
\newtheorem{lem-dfn}[thm]{Lemma-Definition}
\newtheorem{prop}[thm]{Proposition}
\newtheorem{cor}[thm]{Corollary}
\theoremstyle{definition}
\newtheorem{defn}[thm]{Definition}
\newtheorem{exam}[thm]{Example}
\newtheorem{ex}[thm]{Example}
\newtheorem{quest}[thm]{Question}
\theoremstyle{remark}
\newtheorem{rem}[thm]{Remark}
\numberwithin{equation}{section}
\newcommand{\thmref}[1]{Theorem~\ref{#1}}
\newcommand{\figref}[1]{Figure~\ref{#1}}
\newcommand{\ssref}[1]{Subsection~\ref{#1}}
\DeclareMathOperator{\Spec}{Spec}
\DeclareMathOperator{\Coker}{Coker}
\DeclareMathOperator{\chara}{char}
\DeclareMathOperator{\nr}{nr}
\DeclareMathOperator{\br}{\bar r}
\DeclareMathOperator{\lcm}{lcm}
\newcommand{\m}{\mathfrak m}
\newcommand{\bbZ}{\ensuremath{\mathbb Z}}
\newcommand{\cI}{\mathcal I}
\newcommand{\cO}{\mathcal O}
\newcommand{\ol}[1]{\overline {#1}}
\newcommand{\ce}[1]{\left\lceil #1 \right\rceil}
\newcommand{\ten}{\circle*{0.3}}
\begin{document}
\title{Normal reduction numbers for normal surface singularities 
with application to elliptic singularities of Brieskorn type}
\author{Tomohiro Okuma}
\address[Tomohiro Okuma]{Department of Mathematical Sciences, 
Faculty of Science, Yamagata University,  Yamagata, 990-8560, Japan.}
\email{okuma@sci.kj.yamagata-u.ac.jp}
\author{Kei-ichi Watanabe}
\address[Kei-ichi Watanabe]{Department of Mathematics, College of Humanities and Sciences, 
Nihon University, Setagaya-ku, Tokyo, 156-8550, Japan}
\email{watanabe@math.chs.nihon-u.ac.jp}
\author{Ken-ichi Yoshida}
\address[Ken-ichi Yoshida]{Department of Mathematics, 
College of Humanities and Sciences, 
Nihon University, Setagaya-ku, Tokyo, 156-8550, Japan}
\email{yoshida@math.chs.nihon-u.ac.jp}
\thanks{This work was partially supported by JSPS Grant-in-Aid 
for Scientific Research (C) Grant Numbers, 25400050, 26400053, 17K05216}
\keywords{normal reduction number, geometric genus, 
hypersurface of Brieskorn type}


\begin{abstract}
In this paper, we give a formula for normal reduction number of an integrally closed 
$\m$-primary ideal of a $2$-dimensional normal local ring $(A,\m)$ 
in terms of the geometric genus $p_g(A)$ of $A$. 
Also we compute the normal reduction number of 
the maximal ideal of Brieskorn hypersurfaces. 
As an application, we give a short proof of a classification of Brieskorn 
hypersurfaces having elliptic singularities.  
\end{abstract}

\maketitle

\section{Introduction}
For a given Noetherian local ring $(A,\m)$ and an integrally closed $\m$ primary ideal $I$ with minimal 
reduction $Q$, we are interested in the question;\par
What is the minimal number $r$ such that $I^r \subset Q$ for every $\m$ primary ideal $I$ of $A$ and 
its minimal reduction $Q$ ? 
\par
One example of this direction is the Brian\c con-Skoda Theorem saying;\par

If  $(A,\m)$ is a $d$-dimensional rational singularity (characteristic $0$) or F-rational ring 
(characteristic $p>0$), then $I^d\subset Q$ and $d$ is the minimal possible number in this case
(cf. \cite{LT}, \cite{HH}).\par

We want to ask; what is the minimal number $r$ such that $I^r \subset Q$ for every $\m$ primary
 ideal $I$ of $A$ and its minimal reduction $Q$ ?  
\par
The aim of our paper is to answer this question in the case of normal $2$-dimensional local rings 
using resolution of singularities.
\par
In what follows, we always assume that $(A,\m)$ is an excellent 
two-dimensional normal local domain.  
For any $\m$-primary integrally closed ideal $I \subset A$ 
(e.g. the maximal ideal $\m$) and its minimal reduction $Q$ of $I$, we define two normal reduction numbers as follows:
\begin{eqnarray*}
\nr(I) &=& \min\{n \in \bbZ_{\ge 0} \,|\, \overline{I^{n+1}}=Q\overline{I^n}\}, \\
\br(I) &=& \min\{n \in \bbZ_{\ge 0} \,|\, \overline{I^{N+1}}=Q\overline{I^N} \; \text{for every $N \ge n$}\}.
\end{eqnarray*}

These are analogue of the reduction number $r_Q(I)$ of an ideal $I \subset A$. But in general, $r_Q(I)$ is not 
independent of the choice of a minimal reduction $Q$. 
On the other hand, $\nr(I)=\br(I)$ is \textit{not} known in general. 

Also, we define 
\begin{eqnarray*}
\nr(A)=\max\{\nr(I) \,|\, \text{$I$ is an $\m$-primary integrally closed ideal of $A$}\}, \\
\br(A)=\max\{\br(I) \,|\, \text{$I$ is an $\m$-primary integrally closed ideal of $A$}\}.
\end{eqnarray*}

\par
These invariants of $A$ characterizes \lq\lq  good" singularities.

\begin{ex} [\textrm{See \cite{Li} for (1), \cite{o.h-ell} for (2)}]
Suppose that $A$ is not regular. 
\begin{enumerate}
\item  $A$ is a rational singularity ($p_g(A) =0$) if and only if $\nr(A)= \br(A) =1$.
\item  If $A$ is an elliptic singularity, then $\br(A) =2$, where we say that $A$ is an elliptic 
singularity if the arithmetic genus of the fundamental cycle on any resolution of $A$ is $1$.
\end{enumerate}
\end{ex} 

\par
One of the main aims is to compare these invariants 
with geometric invariants (e.g. geometric genus $p_g(A)$). 
In \cite{OWY1} we have shown that $\nr(A)\le p_g(A)+1$. 
But actually, it turns out that 
we have much better bound (see Theorem \ref{pg-nr}). 

\begin{thm}\label{1.1}
 If $(A,\m)$ is a normal $2$-dimensional local ring, then 
$p_g(A) \ge \binom{\nr(A)}{2}$.
\end{thm}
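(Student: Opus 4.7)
Set $r := \nr(A)$. By the definition of $\nr(A)$, choose an $\m$-primary integrally closed ideal $I \subset A$ with $\nr(I) = r$, and fix a minimal reduction $Q = (a,b)$ of $I$. Take a resolution $f \colon \tX \to \Spec A$ on which $I\cO_{\tX}$ is invertible, say $I\cO_{\tX} = \cO_{\tX}(-Z)$ for an antinef cycle $Z$ on the exceptional divisor; then $\overline{I^n} = H^0(\tX,\cO_{\tX}(-nZ))$ for every $n \ge 0$ since $I$ is integrally closed. After refining $\tX$ and perturbing $a,b$ within $I$ if necessary, we may also arrange that $a,b$ generate the sheaf $\cO_{\tX}(-Z)$ globally, equivalently $Q\cO_{\tX} = I\cO_{\tX}$. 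Set $q(n) := h^1(\tX,\cO_{\tX}(-nZ))$ and $b(n) := \ell(\overline{I^{n+1}}/Q\overline{I^n})$; then $q(0) = p_g(A)$, and by the minimality of $r$ we have $b(n) \ge 1$ for every $n \in \{0,1,\ldots,r-1\}$.

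\textbf{Key identity.} Because $a,b$ generate the line bundle $\cO_{\tX}(-Z)$, the two-term locally free resolution of $\cO_{\tX}(-Z)$ has rank-one syzygy of determinant $\cO_{\tX}(Z)$, so
$$0 \longrightarrow \cO_{\tX}(Z) \longrightarrow \cO_{\tX}^{\,2} \xrightarrow{(a,b)} \cO_{\tX}(-Z) \longrightarrow 0.$$
Twisting by $\cO_{\tX}(-nZ)$ for $n \ge 1$ and applying $Rf_*$ (with $R^2 f_* = 0$ since the fibres are $1$-dimensional) gives the six-term exact sequence
$$0 \to \overline{I^{n-1}} \to \overline{I^n}^{\,2} \xrightarrow{(a,b)} \overline{I^{n+1}} \to R^1f_*\cO_{\tX}(-(n-1)Z) \to R^1f_*\cO_{\tX}(-nZ)^{\,2} \to R^1f_*\cO_{\tX}(-(n+1)Z) \to 0.$$
The cokernel of the $H^0$-portion is $\overline{I^{n+1}}/Q\overline{I^n}$, of length $b(n)$, embedding into $R^1 f_*\cO_{\tX}(-(n-1)Z)$. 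A length count through the remaining finite-length tail yields the key identity
$$b(n) \;=\; q(n-1) - 2\,q(n) + q(n+1) \qquad (n \ge 1).$$

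\textbf{Combinatorial bound.} Writing $d(n) := q(n) - q(n+1)$, the identity reads $b(n) = d(n-1) - d(n) \ge 0$, so $d(\cdot)$ is non-increasing. The exact sequence $0 \to \cO_{\tX}(-nZ) \to \cO_{\tX} \to \cO_{nZ} \to 0$ gives
$$\ell(A/\overline{I^n}) \;=\; \chi(\cO_{nZ}) - q(n) + p_g(A),$$
and comparison with the normal Hilbert polynomial (a polynomial in $n$ for $n \gg 0$) shows that $q(n)$ must be eventually constant, say with limit $q_\infty \ge 0$. Hence $d(n) \to 0$, and being non-increasing, $d(n) \ge 0$ for every $n$. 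Telescoping,
$$\sum_{n \ge 1} n\,b(n) \;=\; \sum_{n \ge 1} n\bigl(d(n-1) - d(n)\bigr) \;=\; \sum_{k \ge 0} d(k) \;=\; p_g(A) - q_\infty \;\le\; p_g(A).$$
Since $b(n) \ge 1$ for each $n = 1,\ldots,r-1$,
$$p_g(A) \;\ge\; \sum_{n \ge 1} n\,b(n) \;\ge\; \sum_{n=1}^{r-1} n \;=\; \binom{r}{2} \;=\; \binom{\nr(A)}{2}.$$

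\textbf{Main obstacle.} The technical heart is the short exact sequence with syzygy $\cO_{\tX}(Z)$ and the resulting second-difference identity for $b(n)$. Establishing $Q\cO_{\tX} = I\cO_{\tX}$ is a genericity condition addressed by passing to a sufficiently fine resolution; translating the long exact cohomology into the clean second-difference identity is the cohomological bookkeeping developed in \cite{OWY1}. Given that identity, the telescope estimate above—where the $n$-weighting promotes a linear count of the ``$b(n) \ge 1$'' inequalities to a quadratic bound on $p_g(A)$—is immediate.
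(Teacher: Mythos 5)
Your proposal is correct and follows essentially the same route as the paper: the key identity $b(n)=q(n-1)-2q(n)+q(n+1)$ is exactly the paper's Lemma~\ref{qI-nrI} (obtained from the same Koszul-type sequence $0\to\cO_X(-(n-1)Z)\to\cO_X(-nZ)^{\oplus 2}\to\cO_X(-(n+1)Z)\to 0$), and your weighted telescoping $\sum n\,b(n)=\sum_{k\ge 0}d(k)\le p_g(A)$ is just a repackaging of the paper's iteration of $a_k-a_{k-1}\ge a_{k-1}-a_{k-2}+1$. The only cosmetic difference is that you derive $d(n)\ge 0$ from eventual constancy of $q$ rather than citing the monotonicity of $q(nI)$ from \cite{OWY1}, and in exchange you drop the sharper conclusion $p_g(A)\ge\binom{r}{2}+q(rI)$ that the paper records.
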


\par
On the other hand, sometimes we have $\nr(A) = \nr(\m)$.  
For example, if $A=K[[x,y,z]]/(f)$, where $f$ is a homogeneous 
polynomial of degree $d\ge 2$ with isolated singularity, 
it is easy to see $\nr(\m) = d-1$. If $d\le 4$, we can see by 
Theorem \ref{1.1} that $nr(A) = \nr(\m)=d-1$. We do not the answer yet if $d=5$.

\begin{quest}  If $A$ is a homogeneous surface singularity of degree $d$, then 
$\nr(A)= d-1$ ?
\end{quest}
\par
To have examples for this theory, we compute  $\nr(\m)$  of Brieskorn hypersurface 
singularities, that is, two-dimensional normal local domains 
\[
A=K[[x,y,z]]/(x^a+y^b+z^c),
\]
where $K$ is an algebraically closed field of any characteristic and $2 \le a \le b \le c$.  
\par
Note that our approach in this paper will be extended to the case of Brieskorn complete intersection singularity; 
see \cite{MO}.

\par 
We can get explicit value of $\nr(\m)$ in this case. 

\par \vspace{2mm} \par \noindent 
{\bf Theorem \ref{Bries-Hyp}.} 
Let $A$ be a Brieskorn hypersurface singularity as above. 
Put $\m=(x,y,z)A$ and $Q=(y,z)A$. 
Then 
\[
\nr(\m)=\br(\m)=
\left\lfloor \frac{(a-1)b}{a} \right\rfloor. 
\]
Moreover, if we put $n_k=\lfloor \frac{kb}{a}\rfloor$ 
for each $k \ge 0$, then 
\[
\overline{\m^n}=Q^n + xQ^{n-n_1}+x^2Q^{n-n_2}+\cdots 
+x^{a-1}Q^{n-n_{a-1}}.
\]
\par \vspace{2mm}
As an application of the theorem, we can show that 
the Rees algebra $\mathcal{R}(\m)$ is normal if and only if $\br(\m)=a-1$; see Corollary \ref{normalRees}. 
Moreover, we can determine $\ell_A(\m^{n+1}/Q\m^n)$ 
for every $n \ge 0$ and $q(\m)=\ell_A(H^1(X,\mathcal{O}_X(-M))$, where $X \to \Spec A$ denotes 
the resolution of singularity of $\Spec A$ and 
$M$ denotes the maximal ideal cycle on $X$.  

\par \vspace{3mm}
In the last section, we discuss Brieskorn hypersurface with elliptic singularities. 
In fact, the first author proved that if $A$ is an 
elliptic singularity then $\nr(A)=2$. 
In particular, if $A$ is an elliptic singularity 
then $\nr(\m) \le 2$. 
If, in addition, $A$ is a Brieskorn hypersurace singularity $A=K[[x,y,z]]/(x^a+y^b+z^c)$, 
then our theorem shows that 
$\lfloor (a-1)b/a \rfloor \le 2$. 
Using this fact, we can classify all Brieskorn 
hypersurfaces with elliptic singularity. 
See Theorem \ref{elliptic}. 
\par
We are interested to know if $\nr(A)$ characterizes elliptic singularities or not.
Namely, the question is equivalent to say, if $A$ is not rational or elliptic, then 
does there exist $I$ such that $\nr(I)\ge 3$? 
We can find such ideal for all non-elliptic 
Brieskorn hypersurface singularity except $(a,b,c)=(3,4,6)$ or $(3,4,7)$. 

\bigskip
\section{Normal reduction numbers and geometric genus}

Throughout this paper, let $(A,\m)$ be a two-dimensional excellent normal local domain.
In another word, $A$ is a local domain with a resolution of singularities $f : X \to 
\Spec(A)$.  For a coherent $\mathcal{O}_X$-module $\mathcal{F}$, we denote by 
 $h^i(\mathcal{F})$ the length $\ell_A(H^i(\mathcal{F}))$. 

We define the \textit{geometric genus} of $A$ by 
\[
p_g(A)=h^1(\mathcal{O}_X),
\]
which is independent of the choice of resolution of singularities. 
When $p_g(A)=0$, $A$ is called a \textit{rational singularity}. 

\par \vspace{2mm}
Let $I \subset A$ be an $\m$-primary integrally closed 
ideal. 
Then there exists a resolution of singularity  
$X \to \Spec A$ and an anti-nef cycle $Z$ on $X$ so that 
$I\mathcal{O}_X=\mathcal{O}_X(-Z)$ and $I=H^0(\mathcal{O}_X(-Z))$. 
Then we say that $I$ is \textit{represented} by $Z$ on $X$ and 
write $I=I_Z$. 
Then $I_{nZ}=\overline{I^n}$ for every integer $n \ge 1$.  

\par \vspace{2mm}
In what follows, let $A$, $X$, $I=I_Z$ be as above. 

\par 
The authors have studied $p_g$-ideals in \cite{OWY1,OWY2,OWY3}. 
So we first recall the notion of $p_g$-ideals in terms of $q(kI)$.

\begin{defn} \label{qI}
Put $q(I):= h^1(\mathcal{O}_X(-Z))$ and 
$q(nI)= q(\overline{I^n})$ for every 
integer $n \ge 0$.
\end{defn}

\begin{thm}[\cite{OWY1}]
The following statements hold. 
\begin{enumerate}
\item $0 \le q(I) \le p_g(A)$. 
\item $q(kI) \ge q((k+1)I)$ for every integer $k \ge 1$. 
\item $q(nI) = q((n+1)I)=q((n+2)I) = \cdots$ for some integer $n \ge 0$.  
\end{enumerate}
\end{thm}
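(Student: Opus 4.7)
The plan is to deduce all three statements uniformly from a single long exact sequence, namely the one coming from multiplication by a sufficiently general element of $I$. Since extending scalars by a faithfully flat local extension preserves $p_g(A)$ as well as all of the quantities $q(nI)$, I would reduce to the case where the residue field is infinite and then choose $h\in I$ so that, viewed as a global section of the line bundle $\mathcal{O}_X(-Z)=I\mathcal{O}_X$, it generates $\mathcal{O}_X(-Z)$ at every point of the exceptional set. The existence of such an $h$ follows from the equality $I\mathcal{O}_X=\mathcal{O}_X(-Z)$ by a prime-avoidance argument over the finitely many points where generation could fail.

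For such $h$, the zero scheme of the section $h$ is a ``horizontal'' effective Cartier divisor $D_h$ containing no exceptional component, and multiplication by $h$ fits into the short exact sequence
\[
0\longrightarrow\mathcal{O}_X\xrightarrow{\,\cdot h\,}\mathcal{O}_X(-Z)\longrightarrow\mathcal{O}_{D_h}\longrightarrow 0.
\]
Tensoring with the line bundle $\mathcal{O}_X(-nZ)$ preserves exactness, so for every $n\ge 0$ I obtain
\[
0\longrightarrow\mathcal{O}_X(-nZ)\xrightarrow{\,\cdot h\,}\mathcal{O}_X(-(n+1)Z)\longrightarrow\mathcal{O}_{D_h}(-nZ)\longrightarrow 0. \quad (*)
\]
The crucial observation will be that $H^1(X,\mathcal{O}_{D_h}(-nZ))=0$ for every $n\ge 0$: the restriction $\pi|_{D_h}\colon D_h\to\Spec A$ of the resolution is proper with $0$-dimensional fibers (the only nontrivial fiber being the finite set of intersection points of $D_h$ with the exceptional divisor above the closed point), hence finite by Zariski's Main Theorem; since finite morphisms are affine, their higher direct images vanish on quasi-coherent sheaves.

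Granted this vanishing, the long exact cohomology sequence of $(*)$ yields a surjection
\[
H^1(X,\mathcal{O}_X(-nZ))\twoheadrightarrow H^1(X,\mathcal{O}_X(-(n+1)Z))
\]
for every $n\ge 0$, which is exactly $q(nI)\ge q((n+1)I)$. Specialization to $n=0$ gives $p_g(A)\ge q(I)$, and combined with the trivial bound $q(I)\ge 0$ this proves (1); the range $n\ge 1$ gives (2); and statement (3) follows immediately because $\{q(nI)\}_{n\ge 0}$ is a non-increasing sequence of non-negative integers and must stabilize. The main technical point in this plan will be the choice of $h$: one has to arrange that a single element of $I$ simultaneously generates $\mathcal{O}_X(-Z)$ at every potentially problematic point of the exceptional locus, and this is the step where the infinite-residue-field reduction is essential.
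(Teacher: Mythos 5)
The paper itself gives no proof of this theorem --- it is quoted from \cite{OWY1} --- and your argument is in substance the standard one: a suitably general $h\in I$ induces a surjection $H^1(\mathcal{O}_X(-nZ))\twoheadrightarrow H^1(\mathcal{O}_X(-(n+1)Z))$ for all $n\ge 0$ because the cokernel of multiplication by $h$ is supported on a divisor that is finite over $\Spec A$, hence has no higher cohomology; (1)--(3) then follow from monotonicity starting at $q(0\cdot I)=p_g(A)$. The structure is sound, including the infinite-residue-field reduction and the properness-plus-quasi-finiteness argument for the vanishing. One claim as literally written is false, though: no $h\in I$ can generate $\mathcal{O}_X(-Z)$ at \emph{every} point of the exceptional set, since the zero divisor $D_h$ of such a section satisfies $D_h\cdot Z=-Z^2>0$ by negative definiteness of the intersection form, so $D_h$ necessarily meets $E$ (your own later remark about ``the intersection points of $D_h$ with the exceptional divisor'' concedes this). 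What you need, and what the rest of your argument actually uses, is the weaker property that $h$ generates $\mathcal{O}_X(-Z)$ at the \emph{generic point} of each exceptional component, equivalently that $D_h$ contains no component of $E$; the prime-avoidance argument over the finitely many generic points of $E$ (valid once the residue field is infinite) gives exactly this. With that correction, and noting that the cokernel in your first sequence should be $\mathcal{O}_{D_h}\otimes\mathcal{O}_X(-Z)$ rather than $\mathcal{O}_{D_h}$ (which changes nothing, since only the support matters for the vanishing), the proof is complete.
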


\begin{defn}[\cite{OWY1}] \label{pg-ideal}
The ideal $I$ is called the \textit{$p_g$-ideal} if 
$q(I)=p_g(A)$. 
\end{defn}

\begin{exam} \label{pg-exam}
Any two-dimensional excellent normal local domain over an algebraically closed field admits a $p_g$-ideal. 
Moreover, if $A$ is a rational singularity,
then every $\m$-primary integrally closed ideal is 
a $p_g$-ideal. 
\end{exam}

\subsection{Upper bound on normal reduction numbers}

Let $Q$ be a minimal reduction of $I$. 
Then there exists a nonnegative integer $r$ such that 
$\overline{I^{r+1}}=Q\overline{I^r}$. 
This is independent of the choice of a minimal reduction 
$Q$ of $I$ (see e.g. \cite[Theorem 4.5]{Hu}). 
So we can define the following notion. 

\begin{defn}[\textbf{Normal reduction number}]
Put 
\begin{eqnarray*}
\nr(I) &=& \min\{n \in \bbZ_{\ge 0} \,|\, \overline{I^{n+1}}=Q\overline{I^n}\},\\
\br(I) &=& \min\{n \in \bbZ_{\ge 0} \,|\, \overline{I^{N+1}}=Q\overline{I^N} \;\text{for every}\; N \ge n \}.
\end{eqnarray*}
We call them the \textit{normal reduction numbers} of $I$.  
We also define 
\begin{eqnarray*}
\nr(A)&=&\max\{\nr(I)\,|\, \text{$I$ is a $\m$-primary integrally closed ideal of $A$}\}, \\
\br(A)&=&\max\{\br(I)\,|\, \text{$I$ is a $\m$-primary integrally closed ideal of $A$}\},
\end{eqnarray*}
which are called 
the \textit{normal reduction numbers} of $A$. 
\end{defn}

\par \vspace{2mm}
Our study on normal reduction numbers is motivated by 
the following observation:
For an $\m$-primary ideal $I$ in a two-dimensional 
excellent normal local domain $A$, $I$ is a $p_g$-ideal 
if and only if $\br(I)=1$. 

\par \vspace{2mm}
By definition, 
$\nr(I) \le \br(I)$ holds in general. 
In the next section, 
we show that $\nr(\m)=\br(\m)$ holds true 
for any Brieskorn hypersurface $A=K[[x,y,z]]/(x^a+y^b+z^c)$. 
But it seems to be open whether equality always holds
for other integrally closed $\m$-primary ideals. 

\begin{quest}
When does $\nr(I)=\br(I)$ hold? 
\end{quest}

\par \vspace{2mm}
In order to state the main result in this section, we recall the following lemma, which gives a relationship 
between $\nr(I)$ and $q(kI)$. 

\begin{lem} \label{qI-nrI}
For any integer $n \ge 1$, we have 
\[
2 \cdot q(nI) + \ell_A(\overline{I^{n+1}}/Q\overline{I^n})
=q((n+1)I)+q((n-1)I).  
\]
\end{lem}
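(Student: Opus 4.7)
The plan is to establish the identity by comparing cohomologies along a Koszul-type short exact sequence of line bundles on a resolution of $\Spec A$, converting $\ell_A(\overline{I^{n+1}}/Q\overline{I^n})$ into an alternating sum of first cohomology lengths $q(mI)$. Fix a resolution $f\colon X \to \Spec A$ on which $I = I_Z$ is represented, and let $Q = (a,b)$ be a minimal reduction of $I$; it has two generators since $\dim A = 2$. Because $Q$ is a reduction of $I$, we have $Q\cdot \mathcal{O}_X = I\cdot \mathcal{O}_X = \mathcal{O}_X(-Z)$, so $(a,b)$ generate the invertible sheaf $\mathcal{O}_X(-Z)$ at every point of $X$.

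The first step is to produce the short exact sequence
\[
0 \to \mathcal{O}_X(-(n-1)Z) \xrightarrow{\binom{-b}{a}} \mathcal{O}_X(-nZ)^{\oplus 2} \xrightarrow{(a,b)} \mathcal{O}_X(-(n+1)Z) \to 0
\]
and to verify exactness stalkwise. At a point $x \in X$ where, say, $a$ generates $\mathcal{O}_X(-Z)_x$ and $b = ua$ for some $u \in \mathcal{O}_{X,x}$, trivializing $\mathcal{O}_X(-Z)_x$ reduces the sequence to $0 \to R \to R^{\oplus 2} \to R \to 0$ with $R = \mathcal{O}_{X,x}$ and maps $g \mapsto (-ug, g)$ and $(g_1, g_2) \mapsto g_1 + ug_2$, which is elementarily exact for any $u \in R$; the case in which $b$ is the local generator is symmetric.

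The second step is to take cohomology, using $H^i(X, \mathcal{O}_X(-mZ)) = 0$ for $i \ge 2$ (the fibers of $f$ are one-dimensional), the identification $H^0(\mathcal{O}_X(-mZ)) = \overline{I^m}$ (with the convention $\overline{I^0} = A$, so $q(0\cdot I) = p_g(A)$), and the observation that the image of the induced map $H^0(\mathcal{O}_X(-nZ))^{\oplus 2} \to H^0(\mathcal{O}_X(-(n+1)Z))$ is precisely $Q\overline{I^n}$. The long exact sequence then collapses to a four-term exact sequence
\[
0 \to \overline{I^{n+1}}/Q\overline{I^n} \to H^1(\mathcal{O}_X(-(n-1)Z)) \to H^1(\mathcal{O}_X(-nZ))^{\oplus 2} \to H^1(\mathcal{O}_X(-(n+1)Z)) \to 0
\]
of finite-length $A$-modules. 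Taking the alternating sum of lengths and rearranging yields the stated identity. The only point requiring genuine attention is the stalkwise verification of Koszul exactness; the remainder is routine cohomological bookkeeping.
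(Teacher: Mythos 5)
Your proposal is correct and follows essentially the same route as the paper: the same Koszul-type short exact sequence of sheaves $0 \to \cO_X(-(n-1)Z) \to \cO_X(-nZ)^{\oplus 2} \to \cO_X(-(n+1)Z) \to 0$, followed by the cohomology long exact sequence and the identification of the cokernel of the $H^0$-level map with $\overline{I^{n+1}}/Q\overline{I^n}$. The only difference is that you verify the stalkwise exactness directly, whereas the paper cites it from an earlier work of the authors.
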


\begin{proof}
Assume $Q=(a,b)$ and consider the exact sequence  
\[
0\to \cO_X((n-1)Z) \to \cO_X(-Z)(-nZ)^{\oplus 2}\to \cO_X(-(n+1)Z)\to 0,
\]
where the map $\cO_X(-nZ)^{\oplus 2}\to \cO_X(-(n+1)Z)$ is defined by  
$(x,y)\mapsto ax+by$ as in Lemma 4.3 of \cite{OWY3}. Taking the cohomology long exact sequence, 
we have the following exact sequence$:$
\[
\begin{array}{lll}
& \to \,H^0(\mathcal{O}_X(-nZ))^{\oplus 2} 
& \stackrel{\varphi}{\to} \,H^0(\mathcal{O}_X(-(n+1)Z)) \\
  \to \,H^1(\mathcal{O}_X(-(n-1)Z)) 
& \to \,H^1(\mathcal{O}_X(-nZ))^{\oplus 2} 
& \to \,H^1(\mathcal{O}_X(-(n+1)Z)) \to 0.
\end{array}
\]
Since $\Coker(\varphi)\cong \overline{I^{n+1}}/Q\overline{I^n}$, we obtain the required assertion. 
\end{proof}

The lemma gives another description of $\nr(I)$ in terms of 
$q(kI)$:
\[
\nr(I)=\min\{n \in \bbZ_{\ge 0} \,|\, 
\text{$q((n-1)I)$, $q(nI)$, $q((n+1)I)$ forms an arithmetic sequence} \}.
\]
In particular, 
\[
\nr(I) \le \min\{n \in \bbZ_{\ge 0}\,|\, q((n-1)I)=q(nI)=q((n+1)I)=\cdots \}=\br(I).
\]

If the following question has an affirmative answer for $I$, then $\nr(I)=\br(I)$ holds true. 

\begin{quest} 
When is $\ell_A(\overline{I^{n+1}}/Q\overline{I^n})$ a non-increasing function of $n$?
\end{quest}

\par \vspace{2mm}
The main result in this section is the following theorem, 
which refines an inequality $\nr(I) \le p_g(A)+1$;
see \cite[Lemma 3.1]{OWY2}. 

\begin{thm} \label{pg-nr}
For any $\m$-primary integrally closed ideal $I \subset A$, we have 
\[
p_g(A) \ge \binom{r}{2}  + q(rI),
\] 
where $r=\nr(I)$. 
In particular, $p_g(A) \ge \binom{\nr(A)}{2}$. 
\end{thm}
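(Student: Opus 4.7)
The plan is to use Lemma \ref{qI-nrI} iteratively to control the decay of the function $n \mapsto q(nI)$, and then telescope from $n=0$ (where $q(0 \cdot I) = p_g(A)$) to $n = r$.

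More concretely, I would set $a_n := q((n-1)I) - q(nI)$ for $n \ge 1$, interpreting $q(0 \cdot I) = h^1(\cO_X) = p_g(A)$. Rewriting Lemma \ref{qI-nrI} gives
\[
a_n - a_{n+1} = \ell_A\bigl(\overline{I^{n+1}}/Q\overline{I^n}\bigr) \ge 0,
\]
so the sequence $(a_n)_{n \ge 1}$ is non-increasing. By the minimality in the definition of $r = \nr(I)$, we have $\overline{I^{n+1}} \neq Q\overline{I^n}$ for every $n = 1, \dots, r-1$, hence $\ell_A(\overline{I^{n+1}}/Q\overline{I^n}) \ge 1$, i.e.\ $a_n \ge a_{n+1} + 1$ in that range.

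Telescoping these strict drops from $k$ up to $r$ yields $a_k \ge a_r + (r-k) \ge r-k$ for $k = 1, \dots, r$. Summing and telescoping the other way,
\[
p_g(A) - q(rI) \;=\; \sum_{k=1}^{r} a_k \;\ge\; \sum_{k=1}^{r}(r-k) \;=\; \binom{r}{2},
\]
which gives exactly $p_g(A) \ge \binom{r}{2} + q(rI)$. The \emph{In particular} assertion then follows by choosing an $\m$-primary integrally closed ideal $I$ that attains the maximum $\nr(A)$ and dropping the nonnegative term $q(rI)$.

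I do not expect a serious obstacle: once the arithmetic-sequence reformulation of $\nr(I)$ coming from Lemma \ref{qI-nrI} is in hand, everything is a bookkeeping argument on a non-increasing nonnegative integer sequence with forced unit drops. The only point that requires a small sanity check is the base index $n=1$ of the recursion, where one must identify $q(0 \cdot I)$ with $p_g(A)$; this is immediate from the convention $\overline{I^0} = A$, so that $0 \cdot I$ corresponds to the zero cycle on $X$ and $h^1(\cO_X(-0)) = h^1(\cO_X) = p_g(A)$.
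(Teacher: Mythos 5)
Your proof is correct and is essentially the paper's own argument: both rest on Lemma \ref{qI-nrI} to show the first differences $q((n-1)I)-q(nI)$ drop by at least $1$ at each step $n=1,\dots,r-1$ (since $\overline{I^{n+1}}\ne Q\overline{I^n}$ there), and then telescope from $q(0\cdot I)=p_g(A)$ down to $q(rI)$. The only difference is cosmetic indexing (you track the differences directly; the paper sets $a_k=q((r-k)I)$), so nothing further is needed.
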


\begin{proof}
Suppose $\nr(I)=r$. 
Then since $\overline{I^{k+1}} \ne Q\ \overline{I^k}$ 
for every $k=1,2,\ldots,r-1$ and 
$\overline{I^{r+1}} = Q\ \overline{I^r}$, we have 
\begin{eqnarray*}
q((r-1)I)-q(rI) &=& q(rI)-q((r+1)I), \\
q((r-2)I)-q((r-1)I)& \ge & q((r-1)I)-q(rI)+1, \\
&\vdots & \\
p_g(A)-q(I) &\ge & q(I) -q(2 I)+1.
\end{eqnarray*} 
Thus if we put $a_k=q((r-k)I)$ for $k=0,1,\ldots,r$, then 
we get
\[
a_k-a_{k-1} \ge  a_{k-1}-a_{k-2}+1 \ge \cdots  
\ge  \big\{a_1-a_0 \big\} +(k-1) \ge k-1. 
\]
Hence 
\[
p_g(A)=a_r=\sum_{k=1}^r (a_k-a_{k-1})+a_0
\ge \sum_{k=1}^r (k-1) + a_0 =\dfrac{r(r-1)}{2}+q(rI),
\]
as required.
\par  
The last assertion immediately follows from the 
definition of $\nr(A)$.  
\end{proof}

\par 
The above theorem gives a best possible bound. 
See also the next section. 

\begin{exam} 
If 
$p_g(A) < \binom{\nr(J)+1}{2}$
for some $\m$-primary 
integrally closed ideal $J \subset A$, then 
$\nr(A)=\nr(J)$. 
\end{exam}

\begin{proof}
Suppose $\nr(A) \ne \nr(J)$. 
Then $\nr(A) \ge \nr(J)+1$. 
By assumption and the theorem, we have 
\[
\binom{\nr(A)}{2}  \le p_g(A) <  \binom{\nr(J)+1}{2} 
\le \binom{\nr(A)}{2}.
\]
This is a contradiction. 
Therefore $\nr(A)=\nr(J)$.  
\end{proof}

\medskip
\section{Normal reduction numbers of the maximal ideal of 
Brieskorn hypersurafaces}

Let $K$ be a field of any characteristic, and let 
$a$,$b$,$c$ be integers with $2 \le a \le b \le c$. 
Then a hypersurface singularity
\[
A=K[[x,y,z]]/(x^a+y^b+z^c),\quad 
\m=(x,y,z)A
\]
is called a \textit{Brieskorn hypersurface singularity} if $A$ is normal.  
\subsection{Normal reduction number of the maximal ideal}
The main purpose in this section to give a formula for 
the reduction number of the maximal ideal 
$\m$ in a hypersurface of Brieskorn type$:$ 
$A=K[[x,y,z]]/(x^a+y^b+z^c)$. 
Namely, we prove the following theorem. 

\begin{thm} \label{Bries-Hyp}
Let $A=K[[x,y,z]]/(x^a+y^b+z^c)$ be a Brieskorn hypersurface singularity. 
If we put $Q=(y,z)A$ and 
$n_k=\lfloor \frac{kb}{a} \rfloor$ for 
$k=1,2,\ldots,a-1$, then $\m=\overline{Q}$ and we have 
\begin{enumerate} \setlength{\parskip}{1mm}
\item $\overline{\m^n}=Q^n+xQ^{n-n_1}+x^2Q^{n-n_2}
+ \cdots +x^{a-1}Q^{n-n_{a-1}}$ for every $n \ge 1$. 
\item $\br(\m)=\nr(\m)=n_{a-1}$.
\item $\overline{\mathcal{R}'(\m)}$ and $\overline{G}(\m)$
are Cohen-Macaulay. 
\end{enumerate}
\end{thm}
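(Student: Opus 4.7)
The plan is to prove~(1) first, with~(2) and~(3) following by direct computation. Writing
\[
J_n := Q^n + xQ^{n-n_1} + x^2 Q^{n-n_2} + \cdots + x^{a-1} Q^{n-n_{a-1}}
\]
(with the convention $Q^m := A$ for $m \le 0$), the goal is $\overline{\m^n} = J_n$.

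For $J_n \subseteq \overline{\m^n}$: since $a \le b \le c$, $y^b, z^c \in Q^b \subseteq Q^a$, so $x^a + (y^b+z^c) = 0$ is an integral equation for $x$ over $Q$, giving $\m = \overline{Q}$. Raising to the $k$-th power, $(x^k)^a = (-(y^b+z^c))^k \in Q^{kb} \subseteq Q^{an_k} = (Q^{n_k})^a$, so $x^k \in \overline{Q^{n_k}}$; hence $x^k Q^{n-n_k} \subseteq \overline{Q^{n_k}} \cdot Q^{n-n_k} \subseteq \overline{Q^n} \subseteq \overline{\m^n}$.

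For $\overline{\m^n} \subseteq J_n$, let $v_R$ be the $(y,z)$-adic order valuation on the regular local ring $R := K[[y,z]]$, and extend it to a valuation $\tilde v$ on the fraction field of $A$ via the Newton polygon of $x^a + (y^b+z^c)$, which forces $\tilde v(y) = \tilde v(z) = a$ and $\tilde v(x) = b$, hence $\tilde v(\m) = a$. Any $u \in \overline{\m^n}$ then satisfies $\tilde v(u) \ge na$. Using the free $R$-basis $\{1, x, \ldots, x^{a-1}\}$ of $A$, write $u = \sum_{k=0}^{a-1} x^k f_k$ with $f_k \in R$. The $\mu_a$-action $x \mapsto \zeta x$ preserves $\m$ and integral closures, so by isotypic decomposition each $x^k f_k$ lies individually in $\overline{\m^n}$; then $\tilde v(x^k f_k) = kb + a \cdot \ord_R(f_k) \ge na$ yields $\ord_R(f_k) \ge n - n_k$, i.e.\ $f_k \in (y,z)^{n-n_k}$, and so $u \in J_n$. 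The main obstacle is precisely this inclusion: the single valuation $\tilde v$ cannot by itself isolate the individual summands $x^k f_k$ (cancellations can occur when $\gcd(a,b) > 1$), so the $\mu_a$-symmetry is indispensable, and the modular case $\mathrm{char}(K) \mid a$ requires a separate argument.

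For~(2) and~(3), we exploit the $R$-module decomposition $J_n = \bigoplus_{k=0}^{a-1} x^k (y,z)^{\max(n - n_k,\, 0)}$ implied by~(1). Part~(2) is a componentwise calculation: $QJ_n = J_{n+1}$ exactly when $n \ge n_k$ for every $k$, i.e.\ when $n \ge n_{a-1}$; for smaller $n$ some $x^k$-slot of $J_{n+1}$ is all of $x^k R$ while the corresponding slot of $QJ_n$ is only $x^k(y,z)$, forcing strict containment. Part~(3) uses the Valabrega-Valla criterion: $\overline{\m^{n+1}} \cap Q = Q\overline{\m^n}$ for all $n \ge 0$, because both sides equal $\bigoplus_k x^k (y,z)^{\max(n+1-n_k,\,1)}$ by a componentwise check; this yields Cohen-Macaulayness of $\overline{G}(\m)$, and Cohen-Macaulayness of $\overline{\mathcal{R}'(\m)}$ then follows from standard results on $2$-dimensional Cohen-Macaulay local rings.
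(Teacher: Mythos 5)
Your route to the hard inclusion $\overline{\m^n}\subseteq J_n$ in (1) is genuinely different from the paper's. The paper never isolates the components $x^kf_k$ by a valuation: it introduces the monomial filtration $I_N=(x^ky^iz^j \mid kb'+(i+j)a'\ge N)$ with $a'=a/\gcd(a,b)$, $b'=b/\gcd(a,b)$, checks that its associated graded ring is the \emph{reduced} ring $K[X,Y,Z]/(X^a+Y^b)$ (resp.\ $K[X,Y,Z]/(X^a+Y^b+Z^c)$ if $b=c$), concludes that $\mathcal{R}'(\{I_N\})$ is normal, hence that $J_n=I_{na'}$ is integrally closed, and then squeezes $Q^n\subseteq J_n\subseteq\overline{Q^n}$. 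That argument is characteristic-free, using only that normality of $A$ forces $\chara K\nmid\gcd(a,b)$. Your parts (2) and (3) are sound: (2) is the same componentwise computation as the paper's, and (3) via the Valabrega--Valla criterion (plus the fact that $t^{-1}$ is a nonzerodivisor on $\overline{\mathcal{R}'}(\m)$ with quotient $\overline{G}(\m)$) is a legitimate alternative to the paper's argument, which goes the other way around by exhibiting $\overline{\mathcal{R}'}(\m)$ as a Veronese subring of the Cohen--Macaulay ring $\mathcal{R}'(\{I_N\})$.

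The genuine gap is the one you flag and do not close: the case $\chara K\mid a$. The theorem is stated over a field of arbitrary characteristic, and this case really occurs with $A$ normal and with $\gcd(a,b)>1$ (so that the group action, not just the valuation, is indispensable): e.g.\ $\chara K=2$ and $(a,b,c)=(6,9,11)$, where the Jacobian criterion gives an isolated singularity but $\mu_6$ is not linearly reductive, so the averaging operator $\frac{1}{a}\sum_j\zeta^{-jk}\sigma^j$ does not exist. As written, your proof of (1) — and hence of everything downstream — does not cover a case the theorem asserts. The gap looks repairable within your framework: components whose valuations can collide satisfy $k\equiv k'\pmod{a'}$, and since $\chara K\nmid\gcd(a,b)$ by normality, the prime-to-$p$ part $\mu_m\subseteq\mu_a$ already assigns distinct characters to the $\gcd(a,b)$ components in each residue class modulo $a'$; but this refinement (together with the base change to $K(\zeta)$ and the compatibility of integral closure with it) needs to be carried out, or else one should fall back on the paper's filtration argument, which avoids the issue entirely.
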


\begin{rem}
Note $0:=n_0 \le n_1 < n_2 < \cdots < n_{a-1}$. 
In particular, $n_{k} \ge k$ for each $k=0,1,\ldots,a-1$.  
\end{rem}

\par \vspace{3mm}
In the following, we use the notation in this theorem and prove it. 

\begin{lem} \label{socle-k}
For integers $k$, $n$ with $n \ge 1$ and $1 \le n \le a-1$, we have 
that $x^k \in \overline{Q^n}$ if and only if $n \le n_k$.  
\end{lem}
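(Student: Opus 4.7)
The plan is to prove the two implications separately: the backward one by writing down an explicit integral dependence relation, and the forward one via the valuative criterion for integral closure applied to a carefully chosen valuation.

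For the backward direction, suppose $n \le n_k = \lfloor kb/a \rfloor$. Raising $x^k$ to the $a$th power and using the relation $x^a = -(y^b+z^c)$,
\[
(x^k)^a = (-1)^k(y^b+z^c)^k \in Q^{kb},
\]
since $y^b, z^c \in Q^b$. Thus $x^k$ satisfies the monic equation $T^a - (-1)^k(y^b+z^c)^k = 0$ whose only nonzero coefficient lies in $Q^{kb}$, which by the definition of integral dependence places $x^k$ in $\overline{Q^m}$ for every $m$ with $am \le kb$. In particular $x^k \in \overline{Q^{n_k}} \subseteq \overline{Q^n}$.

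For the forward direction I would construct a valuation $v$ on the fraction field of $A$, nonnegative on $A$, with $v(x) = b$ and $v(y) = v(z) = a$, and then apply the valuative criterion. Let $R = K[[y,z]] \subset A$, and let $v_0$ denote the $(y,z)$-adic order on $R$; this is a genuine rank-one valuation because $\gr_{(y,z)} R \cong K[y,z]$ is a domain. Since $A \cong R[x]/(x^a+y^b+z^c)$ is $R$-free of rank $a$ with basis $1, x, \ldots, x^{a-1}$, the fraction field of $A$ is a finite algebraic extension of that of $R$, so $v_0$ extends to at least one valuation $w$ on it. The relation forces $a \cdot w(x) = v_0(y^b+z^c) = b$, the final equality coming from $b \le c$. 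Rescaling $v := a w$ yields $v(x) = b$, $v(y) = v(z) = a$, and $v(Q) = a$. The valuative criterion now converts $x^k \in \overline{Q^n}$ into $v(x^k) \ge n\,v(Q)$, i.e.\ $kb \ge na$, which in integers says $n \le \lfloor kb/a \rfloor = n_k$.

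The main technical point to verify is that $v$ is nonnegative on the complete local ring $A$ itself, not just on the polynomial subring $R[x]$. This follows from the decomposition $A = \bigoplus_{i=0}^{a-1} R x^i$: the valuation ring of $v$ contains $R$ (since $v|_R = a v_0 \ge 0$) and contains $x$ (since $v(x) = b \ge 0$), hence contains the $R$-algebra they generate, which is $A$.
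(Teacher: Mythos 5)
Your argument is correct, and the backward implication coincides with the paper's: raise $x^k$ to the $a$-th power and use $x^a=-(y^b+z^c)\in Q^b$ to exhibit the integral dependence relation over $Q^{n_k}$. For the forward implication you take a genuinely different route. The paper argues directly from the definition of integral closure: from $x^k\in\overline{Q^n}$ it extracts a nonzero $c$ with $c\,x^{ka\ell}\in Q^{na\ell}$ for large $\ell$, removes $c$ via Artin--Rees, and, assuming $n\ge n_k+1$, places $(y^b+z^c)^{k\ell}$ in a power of $Q$ strictly exceeding $bk\ell$, which forces $y^{bk\ell}\in(y^{bk\ell+1},z)$ and contradicts the regularity of the sequence $y,z$. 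You instead manufacture the monomial valuation $v$ with $v(x)=b$, $v(y)=v(z)=a$ by extending the $(y,z)$-adic order of $R=K[[y,z]]$ to the fraction field of $A$, and apply the easy direction of the valuative criterion to get $kb\ge na$. Your verifications of the delicate points are all in order: the order function on $R$ is a valuation because $\gr_{(y,z)}R$ is a domain; $v_0(y^b+z^c)=b$ holds even when $b=c$ since the leading forms $Y^b$ and $Z^b$ cannot cancel; $a\,w(x)=b$ follows from the defining relation; and nonnegativity of $v$ on all of $A$ follows from $A=\bigoplus_{i=0}^{a-1}Rx^i$. Each approach has its merits: the paper's stays entirely within elementary ideal theory (no extension theorem for valuations needed), while your valuation is essentially the weight that governs the monomial filtration $\{I_n\}$ introduced immediately after this lemma, so it explains conceptually why that filtration computes the integral closures $\overline{\m^n}$ in Theorem 3.1.
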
  

\begin{proof}
Suppoese $n \le n_k$.  Then 
\[
(x^k)^a=(x^a)^k=(-1)^k (y^b+z^c)^k \in Q^{bk} \subset Q^{an_k} =(Q^{n_k})^a. 
\] 
Hence $x^k \in \overline{Q^{n_k}} \subset \overline{Q^n}$. 
\par \vspace{2mm}
Next, we prove the converse. 
Suppose $x^k \in \overline{Q^n}$. 
Then there exists a nonzero element $c \in A$ such that $c(x^k)^{\ell} \in Q^{n\ell}$ for 
all large integers $\ell$. 
By Artin-Rees' lemma (\cite[Theorem 8.5]{Ma}), we can choose an integer $\ell_0 \ge 1$ such that 
$Q^{\ell} \cap cA =cQ^{\ell-\ell_0}$ for every $\ell \ge \ell_0$. 
\par 
Now suppose that $n \ge n_k+1$. Since $\frac{kb}{a}+\frac{1}{a} \le n_k+1 \le n$, we get 
\[
(y^b+z^c)^{k\ell}=(-1)^kx^{ka\ell} \in Q^{na\ell} \colon c \subset Q^{na\ell -\ell_0} \subset Q^{(n_k+1)a\ell-\ell_0} 
\subset Q^{(bk+1)\ell-n_0}
\]
for sufficiently large $\ell$. 
This implies that $y^{bk\ell} \in (y^{bk\ell+1}, z)$ and this is a contradiction because $y$, $z$ 
forms a regular sequence.  
Therefore $n \le n_k$, as required. 
\end{proof}

\begin{cor} \label{L_n}
For an integer $n \ge 1$, if we put 
\[
L_n=Q^n+xQ^{n-n_1}+x^2Q^{n-n_2}+ \cdots + x^{a-1}Q^{n-n_{a-1}}, 
\]
then $Q^n \subset L_n \subset \overline{Q^n} = \overline{\m^n}$. 
\end{cor}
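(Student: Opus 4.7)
The plan is to verify three ingredients in turn: the trivial inclusion $Q^n \subset L_n$, the equality $\overline{Q^n} = \overline{\m^n}$, and the nontrivial inclusion $L_n \subset \overline{Q^n}$. With the convention $Q^m = A$ for $m \le 0$, the summand with $k = 0$ in $L_n$ is $Q^n$ itself (since $n_0 = 0$), which immediately gives $Q^n \subset L_n$.

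For the equality $\overline{\m^n} = \overline{Q^n}$, I would first show $\m = \overline{Q}$. Since $b, c \ge a$, we have $y^b, z^c \in Q^a$, and hence $x^a = -(y^b + z^c) \in Q^a$, so $x$ is integral over $Q$. This gives $\m = (x, y, z) \subset \overline{Q}$, while the reverse containment follows from $\overline{Q} \subset \overline{\m} = \m$. Taking $n$-th powers and using the standard fact $\overline{I} \cdot \overline{J} \subset \overline{IJ}$, I obtain $\m^n \subset \overline{Q}^n \subset \overline{Q^n}$, hence $\overline{\m^n} \subset \overline{Q^n}$; the reverse inclusion follows from $Q \subset \m$.

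The main content is the inclusion $L_n \subset \overline{Q^n}$, for which I must show $x^k Q^{n - n_k} \subset \overline{Q^n}$ for each $0 \le k \le a - 1$. By \lemref{socle-k}, $x^k \in \overline{Q^{n_k}}$. When $n \ge n_k$, the product formula $\overline{I}\cdot\overline{J} \subset \overline{IJ}$ yields $x^k \cdot Q^{n-n_k} \subset \overline{Q^{n_k}} \cdot \overline{Q^{n - n_k}} \subset \overline{Q^n}$. When $n < n_k$, the convention gives $Q^{n - n_k} = A$, and then $x^k \in \overline{Q^{n_k}} \subset \overline{Q^n}$ yields the inclusion directly. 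The corollary is essentially a repackaging of \lemref{socle-k} together with $\m = \overline{Q}$, so no substantive obstacle is anticipated; the only point requiring care is the convention for nonpositive powers of $Q$.
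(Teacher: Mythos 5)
Your proof is correct, but the key inclusion $L_n \subset \overline{Q^n}$ is obtained by a different mechanism than in the paper. Both arguments rest on Lemma~\ref{socle-k}; the paper, however, reduces a monomial $x^k y^i z^j$ to $x^k$ by repeatedly stripping off $y$ and $z$ via the colon property of integral closures of powers of parameter ideals (\cite[Corollary 6.8.13]{SH}), and thereby proves the sharper two-way criterion $x^k y^i z^j \in \overline{Q^n}$ if and only if $i+j \ge n-n_k$. You instead argue forward from $x^k \in \overline{Q^{n_k}}$ using the elementary submultiplicativity $\overline{I}\cdot\overline{J} \subset \overline{IJ}$, which proves exactly the one inclusion the corollary asserts and avoids invoking the (nontrivial) colon-capturing theorem. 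Nothing is lost by this economy: the reverse inclusion $\overline{Q^n} \subset L_n$ is obtained later, in the proof of Theorem~\ref{Bries-Hyp}, from the normality of $\mathcal{R}'(\{I_n\})$ rather than from the paper's ``only if'' direction. You also supply two points the paper leaves implicit --- the verification that $\m=\overline{Q}$ (hence $\overline{\m^n}=\overline{Q^n}$) via the integral equation $x^a+(y^b+z^c)=0$ with $y^b+z^c\in Q^a$, and the careful handling of the convention $Q^m=A$ for $m\le 0$ when $n<n_k$ --- and both are handled correctly.
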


\begin{proof}
It is enough to prove $x^ky^iz^j \in \overline{Q^n}$ if and only if 
$i+j \ge n-n_k$. 
In fact, since $Q=(y, z)$ is a parameter ideal in $A$,   
\cite[Corollary 6.8.13]{SH} and Lemma \ref{socle-k} 
imply 
\begin{eqnarray*}
x^ky^iz^j \in \overline{Q^n} 
& \Longleftrightarrow & x^ky^{i-1}z^j \in \overline{Q^{n-1}} \\
& \Longleftrightarrow & \cdots\cdots \\
& \Longleftrightarrow & x^k  \in \overline{Q^{n-i-j}} \\
& \Longleftrightarrow & n-n_k \le i+j .
\end{eqnarray*}
Hence $L_n \subset \overline{Q^n}$. 
\end{proof}

Put $d=\gcd(a,b)$, $a'=\frac{a}{d}$ and $b'=\frac{b}{d}$. 
If we put
\[
I_n=(x^ky^iz^j \,|\, kb'+ia'+ja' \ge n)A
\] 
for every $n \ge 1$, then $\{I_n\}_{n=1,2,\cdots}$ is 
a filtration of $A$. 

\begin{lem} \label{G-reduced}
$G(\{I_n\})$ is always reduced. 
In particular, $\mathcal{R}'(\{I_n\})$ is a Gorenstein normal domain. 
\end{lem}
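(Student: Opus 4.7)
The plan is to identify $G(\{I_n\})$ with a weighted-polynomial hypersurface, prove its defining polynomial is square-free, and deduce the structure of $\mathcal{R}'(\{I_n\})$ by viewing it as a hypersurface in a regular extended Rees algebra. The filtration $\{I_n\}$ on $A$ is the image of the monomial filtration $\{J_n\}$ on $\widetilde A := K[[x,y,z]]$ with weights $v(x) = b'$, $v(y) = v(z) = a'$; its associated graded ring is the weighted polynomial ring $K[X,Y,Z]$ with $\deg X = b'$, $\deg Y = \deg Z = a'$. The three monomials of $f = x^a + y^b + z^c$ carry weights $ab', ab', ca'$ respectively, and $c \ge b$ gives $ca' \ge ab'$, so the initial form reads
\[
\ini(f) = \begin{cases} X^a + Y^b, & c > b, \\ X^a + Y^b + Z^c, & c = b. \end{cases}
\]
Since $\ini(f)$ is a non-zero-divisor in the domain $K[X,Y,Z]$, the standard identification of associated graded rings of hypersurface quotients yields $G(\{I_n\}) \cong K[X,Y,Z]/(\ini(f))$.

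The key technical step is to verify $\ini(f)$ is square-free. Normality of $A$ means the singular locus of $V(f)$ reduces to the origin, and the Jacobian $(ax^{a-1}, by^{b-1}, cz^{c-1})$ cutting out only $\{0\}$ forces at most one of $a, b, c$ to be divisible by $p = \chara K$. If $c > b$, then $p \nmid \gcd(a,b) = d$, so the $d$-th roots $\omega_1, \dots, \omega_d$ of $-1$ in $\bar{K}$ are pairwise distinct, and
\[
X^a + Y^b = \prod_{i=1}^{d} (X^{a'} - \omega_i Y^{b'})
\]
is a product of pairwise coprime irreducibles, each irreducible via $\gcd(a', b') = 1$. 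If $c = b$, the same singular-locus analysis forces $p \nmid b$, and any repeated factor $g$ of $X^a + Y^b + Z^b$ would have to divide either $\partial \ini(f)/\partial X = aX^{a-1}$ (when $p \nmid a$) or $\partial \ini(f)/\partial Y = bY^{b-1}$ (when $p \mid a$); thus $g$ would be a power of $X$ or $Y$, and substituting $X = 0$ or $Y = 0$ in $\ini(f)$ then yields a contradiction. So $\ini(f)$ is square-free and $G$ is reduced.

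For the second assertion, set $\xi = xT^{b'}$, $\eta = yT^{a'}$, $\zeta = zT^{a'}$, $\tau = T^{-1}$; the identity $T^{ab'} f = 0$ in $A[T, T^{-1}]$ translates to the homogeneous relation $F := \xi^a + \eta^b + \zeta^c \tau^{ca' - ab'} = 0$, and comparing generators identifies $\mathcal{R}'(A, \{I_n\}) \cong \mathcal{R}'(\widetilde A, \{J_n\})/(F)$. Since $\mathcal{R}'(\widetilde A, \{J_n\})$, the extended Rees algebra of $\widetilde A$ for the monomial filtration, is a four-dimensional regular ring (it is the coordinate ring of a weighted blowup), $\mathcal{R}'(\{I_n\})$ is a three-dimensional hypersurface, hence Gorenstein; it embeds in the domain $A[T, T^{-1}]$, so is a domain. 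For normality I would apply Serre's $(R_1) + (S_2)$: $(S_2)$ comes from the Cohen--Macaulay hypersurface property, and $(R_1)$ holds because $\mathcal{R}'[\tau^{-1}] = A[T, T^{-1}]$ is normal (as $A$ is), while each height-one prime $\mathfrak p \ni \tau$ corresponds to a minimal prime of the reduced ring $G = \mathcal{R}'/(\tau)$, at which $\tau$ generates $\mathfrak p \mathcal{R}'_{\mathfrak p}$, so the localisation is a DVR.

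The main obstacle is the characteristic-sensitive square-freeness analysis: the normality of $A$ must be translated carefully into precise arithmetic restrictions relating $\chara K$ to $a, b, c$ before the Jacobian and partial-derivative arguments go through. The hypersurface presentation of $\mathcal{R}'$ and the Serre-criterion conclusion are then essentially bookkeeping, modulo confirming the four-dimensional regularity of the ambient $\mathcal{R}'(\widetilde A, \{J_n\})$.
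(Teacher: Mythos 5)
Your proposal is correct and follows essentially the same route as the paper: identify $G(\{I_n\})$ with $K[X,Y,Z]/(X^a+Y^b+Z^c)$ when $b=c$ and with $K[X,Y,Z]/(X^a+Y^b)$ when $b<c$, use normality of $A$ to get $p\nmid\gcd(a,b)$ and hence reducedness, and pass from reducedness of $G$ to normality of $\mathcal{R}'(\{I_n\})$ via the hypersurface (Gorenstein) presentation. The only differences are expository: where the paper disposes of the case $b=c$ by observing that $G$ is then the normal domain $K[X,Y,Z]/(X^a+Y^b+Z^c)$ itself, you rerun a partial-derivative square-freeness check, and where the paper quotes the standard ``$G$ reduced implies $\mathcal{R}'$ normal'' machinery, you verify Serre's criterion directly; both are fine.
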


\begin{proof}
One can easily see 
\begin{equation}
G(\{I_n\}) \cong 
\left\{
\begin{array}{ll}
K[X,Y,Z]/(X^a+Y^b+Z^c) & \text{if}\; b=c \\[2mm] 
K[X,Y,Z]/(X^a+Y^b)  &  \text{if}\; b<c.  
\end{array}
\right.
\end{equation}
By assumption, $K[X,Y,Z]/(X^a+Y^b+Z^c)$ is a normal domain. 
If $\chara K=0$, then $K[X,Y,Z]/(X^a+Y^b)$ is reduced. 
Otherwise, we put $p=\chara K>0$. 
Since $A$ is normal, we have that 
$p$ does \textit{not} divide $\gcd(a,b)=d$. 
Hence $K[X,Y]/(X^a+Y^b)$ is reduced. 
\par 
As $A$ is normal,  $R=\mathcal{R}'(\{I_n\})$ is a Gorenstein normal domain because $G(\{I_n\}) \cong R/t^{-1}R$. 
\end{proof}

\begin{lem} \label{a-Veronese}
$L_{n}=I_{na'}$ for every $n \ge 1$. 
\end{lem}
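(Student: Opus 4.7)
The plan is to prove the equality of ideals by comparing monomial generators, since both $L_n$ and $I_{na'}$ are generated by monomials in $x,y,z$. The key computational fact throughout is that, by definition of the floor function, $\lceil n - kb/a \rceil = n - \lfloor kb/a \rfloor = n - n_k$ for any integers $n, k$.

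For the inclusion $L_n \subset I_{na'}$, I would take a typical generator $x^k y^i z^j$ with $0 \le k \le a-1$ and $i+j \ge n - n_k$ (the degrees arising from the summand $x^k Q^{n-n_k}$, with the convention $Q^m = A$ for $m \le 0$). Since $n_k = \lfloor kb/a \rfloor \le kb/a$, we have $(i+j)a' \ge (n-n_k)a' \ge na' - kb'$, i.e.\ $kb' + (i+j)a' \ge na'$, so $x^k y^i z^j \in I_{na'}$. This direction is essentially routine.

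For the reverse inclusion $I_{na'} \subset L_n$, I would consider a generator $x^k y^i z^j$ satisfying $kb' + (i+j)a' \ge na'$ and split on whether $k < a$. If $k \le a-1$, then the hypothesis gives $i+j \ge n - kb/a$, and since $i+j$ is an integer, $i+j \ge n - n_k$; thus $x^k y^i z^j \in x^k Q^{n-n_k} \subset L_n$. The harder case is $k \ge a$, which I expect to be the main obstacle. There I would use the defining relation $x^a = -(y^b + z^c)$ in $A$: writing $k = qa + r$ with $0 \le r < a$, we get
\[
x^k y^i z^j = (-1)^q x^r y^i z^j (y^b + z^c)^q = (-1)^q \sum_{s=0}^{q} \binom{q}{s} x^r y^{i+bs} z^{j + c(q-s)}.
\]
Each summand has $r < a$, so it suffices to check each lies in $x^r Q^{n-n_r}$. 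Using $c \ge b$, each term has $(y,z)$-degree at least $i + j + bq$. Dividing the hypothesis $(qa+r)b' + (i+j)a' \ge na'$ by $a'$ and using $ab' = a'b$ yields $i + j + qb \ge n - rb/a$, hence $i + j + bs + c(q-s) \ge n - n_r$ after taking ceilings; this places each term in $x^r Q^{n-n_r} \subset L_n$, completing the proof.

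The only real subtlety is this last reduction step: one must verify that the relation $x^a = -(y^b + z^c)$ is compatible with the two filtrations, and the clean reason is the inequality $c \ge b$, which ensures that multiplying by $x^a$ raises $(y,z)$-degree by exactly $b$ on every term of the binomial expansion.
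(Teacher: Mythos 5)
Your proposal is correct and follows the same route as the paper's own proof, which reduces the claim to comparing monomial generators of the two ideals and then declares the verification ``clear from the definition.'' The one point you work out that the paper leaves implicit is the treatment of generators $x^k y^i z^j$ of $I_{na'}$ with $k \ge a$, which you correctly reduce via $x^a = -(y^b+z^c)$ and the inequality $c \ge b$; your computations (in particular $\lceil n - kb/a\rceil = n - n_k$ and $ab'=a'b$) all check out.
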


\begin{proof}
Since $L_n$ and $I_{na'}$ are monomial ideals, 
it suffices to show that 
$x^ky^iz^j \in  L_n$ if and only if $x^ky^iz^j \in  I_{na'}$. 
But this is clear from the definition. 
\end{proof}

We are now ready to prove the theorem. 
\begin{proof}[Proof of Theorem $\ref{Bries-Hyp}$]
(1)  Since $\mathcal{R}'(\{I_n\})$ is normal 
by Lemma \ref{G-reduced}, 
we have that every $I_n$ is integrally closed. 
In particular, $L_n = I_{na'}$ is also integrally closed 
by Lemma \ref{a-Veronese}.  
Therefore $L_n=\overline{Q^n}=\overline{\m^n}$ by Corollary \ref{L_n}. 
\par \vspace{1mm}\par \noindent
(2) One can easily see that 
$L_{n+1}=QL_n$ if and only if $n \ge n_{a-1}$.
Hence (2) is immeadiately follows from (1).  
\par \vspace{1mm} \par \noindent
(3)  $\overline{\mathcal{R}'(\m)}$ is Cohen-Macaulay since it is 
a Veronese subring of a Cohen-Macaulay 
ring $\mathcal{R}'(\{I_n\})$. 
Then $\overline{G}(\m)=\overline{\mathcal{R}'(\m)}/t^{-1}
\overline{\mathcal{R}'(\m)}$ is also Cohen-Macaulay by \cite[Theorem 4.1]{OWY2}.
\end{proof}

\begin{cor} \label{normalRees}
Let $(A,\m)$ be a Brieskorn hypersurface as in Theorem $\ref{Bries-Hyp}$. 
Then 
\begin{enumerate}
\item $\mathcal{R}(\m)$ is normal if and only if $\br(\m) = a-1$. 
\item $\overline{\mathcal{R}(\m)}$ is Cohen-Macaulay if  and only if $\br(\m) = 1$. 
\item $\m$ is a $p_g$-ideal if and only if $a=2$ and $\br(\m)=1$.  
\end{enumerate}
\end{cor}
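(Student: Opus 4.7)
The three statements all flow from the explicit description $\overline{\m^n} = L_n = Q^n + xQ^{n-n_1} + \cdots + x^{a-1}Q^{n-n_{a-1}}$ in Theorem~\ref{Bries-Hyp}, together with the fact (noted in Section~2) that $\m$ is a $p_g$-ideal iff $\br(\m)=1$. I would address them as follows.

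For (1), the plan is to compare $\m^n$ with $L_n$ directly. First, since $\m = Q + (x)$ and $x^a = -(y^b+z^c) \in Q^b$ with $b \ge a$, iterated substitution shows $\m^n = \sum_{k=0}^{a-1} x^k Q^{n-k}$ for every $n \ge a-1$. Because $n_k \ge k$, the inclusion $\m^n \subseteq L_n$ is obvious, and normality of $\mathcal{R}(\m)$ is equivalent to $\m^n = L_n$ for all $n$. Equality forces $x^k Q^{n-n_k} \subseteq \m^n$ for each $k$, and the key lemma is that for $0 \le k \le a-1$ one has $\ord_\m(x^k y^i z^j) = k + i + j$; this is a leading-term argument in $K[[x,y,z]]$ using that among $x^a, y^b, z^c$ the monomial $x^a$ has the smallest total degree (since $a \le b \le c$), so no multiple of the defining relation can lower the $\m$-order of $x^k y^i z^j$. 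Hence $\m^n = L_n$ for all $n$ iff $n_k = k$ for $k = 1, \dots, a-1$, and an elementary floor computation shows this is equivalent to $n_{a-1} = a - 1$, i.e.\ $\br(\m) = a-1$ (the condition $\lfloor (a-1)b/a\rfloor = a-1$ reduces to $b \in \{a, a+1\}$, in which range $n_k = k$ holds throughout).

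For (2), I would invoke the classical criterion (Goto--Shimoda / Lipman type) that for an $\m$-primary ideal $I$ with minimal reduction $Q$ in a two-dimensional Cohen--Macaulay local ring, $\overline{\mathcal{R}(I)}$ is Cohen--Macaulay if and only if $\overline{I^{n+1}} = Q\overline{I^n}$ for all $n \ge 1$. This translates to $\br(\m) \le 1$. Since $x^a + y^b + z^c$ has no linear part under the assumption $a,b,c \ge 2$, the ring $A$ is never regular, so $\m \neq Q$ and thus $\br(\m) \ge 1$, forcing $\br(\m) = 1$. Alternatively, one could derive this intrinsically: Theorem~\ref{Bries-Hyp}(3) gives CM-ness of $\overline{\mathcal{R}'(\m)}$, so the CM-ness of $\overline{\mathcal{R}(\m)}$ reduces to checking that the $a$-invariant of $\overline{G}(\m)$ is negative, which via the exact sequences from \lemref{qI-nrI} amounts to $\overline{\m^{n+1}} = Q\overline{\m^n}$ for all $n \ge 1$.

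For (3), the general equivalence $\m$ is a $p_g$-ideal iff $\br(\m) = 1$ (recorded in Section~2) immediately gives one direction. Combined with Theorem~\ref{Bries-Hyp}, the equation $\lfloor (a-1)b/a\rfloor = 1$ requires $a \le (a-1)b < 2a$; for $a \ge 3$ one has $(a-1)b \ge (a-1)a \ge 2a$, ruling this out, so necessarily $a = 2$ (with $b \in \{2,3\}$). Hence the conditions ``$\br(\m)=1$'' and ``$a=2$ and $\br(\m)=1$'' coincide, proving (3).

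The main obstacle is (2): the criterion equating CM-ness of $\overline{\mathcal{R}(I)}$ with $\overline{I^{n+1}} = Q\overline{I^n}$ for all $n \ge 1$ is the essential input, and either one imports it from the literature or reproves it by a careful $a$-invariant computation on the graded ring $\overline{G}(\m)$ obtained from Theorem~\ref{Bries-Hyp}(3). Parts (1) and (3) are then short consequences once the Veronese/filtration picture of $L_n$ has been exploited.
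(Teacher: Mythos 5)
Your proposal is correct and follows essentially the same route as the paper: part (1) via comparing $\m^n$ with $L_n=\overline{\m^n}$ and the observation that $n_{a-1}=a-1$ forces $n_k=k$ for all $k$, and part (2) via the Goto--Nishida/Hoa--Zarzuela criterion (your ``classical criterion'') reducing Cohen--Macaulayness of $\overline{\mathcal{R}(\m)}$ to $\br(\m)\le 1$, with Theorem~\ref{Bries-Hyp}(3) supplying the Cohen--Macaulayness of $\overline{G}(\m)$. The only (harmless) divergence is in (3): the paper deduces it from (1) and (2) using that $\m$ is a $p_g$-ideal iff $\mathcal{R}(\m)$ is normal and Cohen--Macaulay, whereas you use the equivalence with $\br(\m)=1$ directly together with the arithmetic fact that $\lfloor (a-1)b/a\rfloor=1$ forces $a=2$; both are immediate.
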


\begin{proof}
(1)  Suppose $\br(\m)=a-1$. 
Then $n_{a-1}=a-1$ by (1) and this implies that $n_k=k$ for each $k=1,2,\ldots,a-1$. 
Then one can easily see that $\overline{\m^n} = (Q,x)^n=\m^n$ for 
every $n \ge 1$. Hence $\mathcal{R}(\m)$ is normal. 
\par 
Conversely, if $\mathcal{R}(\m)$ is normal, then 
$\overline{\m^n} = \m^n=(Q,x)^n$. Then $n_{a-1}=a-1$. 
\par \vspace{1mm} \par \noindent
(2) Since $F=\{\overline{\m^n}\}$ is a good $\m$-adic filtration, 
$\overline{\mathcal{R}(\m)}=\mathcal{R}(F)$ 
is Cohen-Macaulay if and only if $G(F)$ is Cohen-Macaulay and $\br(\m)-2=a(G(F))<0$ by  
\cite[Part 2, Corollary 1.2]{Goto-Nishida} and \cite[Theorem 3.8]{Hoa-Zarzuela}. 
\par \vspace{1mm} \par \noindent
(3) $\m$ is a $p_g$-ideal if and only if 
$R(\m)$ is normal and Cohen-Macaulay. 
Hence the assertion follows from (1),(2).  
\end{proof}

\subsection{$q(\m)$ and $\ell_A(\overline{\m^{n+1}}/Q\overline{\m^n})$}

In the proof of Theorem \ref{Bries-Hyp}, we gave a formula of the integral closure of $\m^n$. 
As an application, we give a formula of $q(\m)$ for Brieskorn hypersurface singularities. 

\begin{prop} \label{qM}
Let $A=K[[x,y,z]]/(x^a+y^b+z^c)$ be a Brieskorn hypersurface singularity. 
Under the same notation as in Thoerem \ref{Bries-Hyp}, we have 
\begin{enumerate}
\item 
$\ell_A(\overline{\m^{n+1}}/Q\overline{\m^n}) 
= \max\left(a-\lceil\frac{a(n+1)}{b}\rceil,\;0\right)$.
\item $q(\m)=p_g(A)-\displaystyle{\sum_{k=1}^{a-1}} (n_k-n_{k-1})(a-k)$. 
\end{enumerate}
\end{prop}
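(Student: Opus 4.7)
The plan is to combine the explicit monomial description of $\overline{\m^n}$ from Theorem \ref{Bries-Hyp}(1) with a $K$-basis count for (1), then telescope Lemma \ref{qI-nrI} using (1) for (2).

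For (1), the key observation is that $A$ is a free $K[[y,z]]$-module on the basis $\{x^k : 0 \le k \le a-1\}$, so every element has a unique expansion in monomials $x^k y^i z^j$ with $0 \le k \le a-1$. From the argument in the proof of Corollary \ref{L_n}, such a monomial lies in $\overline{\m^n}$ if and only if $i+j \ge \max(0,\,n-n_k)$. Since $Q=(y,z)$, applying the same criterion to $Q\overline{\m^n} = y\overline{\m^n}+z\overline{\m^n}$ shows that $x^k y^i z^j \in Q\overline{\m^n}$ if and only if $i+j \ge \max(1,\,n+1-n_k)$. Comparing with the analogous membership condition for $\overline{\m^{n+1}}$, the $K$-basis of $\overline{\m^{n+1}}/Q\overline{\m^n}$ consists precisely of the pure powers $x^k$ (i.e., $i=j=0$) for which $n_k \ge n+1$. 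Rewriting $\lfloor kb/a\rfloor \ge n+1$ as $k \ge \lceil a(n+1)/b\rceil$ and counting $k \in \{0,\dots,a-1\}$ yields the claimed maximum.

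For (2), set $c_n := q(n\m) - q((n-1)\m)$ for $n \ge 1$. Lemma \ref{qI-nrI} reads $c_{n+1} - c_n = \ell_A(\overline{\m^{n+1}}/Q\overline{\m^n}) \ge 0$. By Theorem \ref{Bries-Hyp}(2) together with part (1), the right-hand side vanishes for $n \ge n_{a-1}$; since $\{q(n\m)\}$ is non-increasing, $\{c_n\}$ is non-positive and identically zero for $n > n_{a-1}$. Telescoping yields
\[
p_g(A) - q(\m) \;=\; -c_1 \;=\; \sum_{n \ge 1}\ell_A\!\left(\overline{\m^{n+1}}/Q\overline{\m^n}\right).
\]
Substituting the formula from (1) and swapping the order of summation expresses this sum as a linear combination of the $n_k$'s, which an Abel-type summation (using $n_0=0$) rewrites in the compact form $\sum_{k=1}^{a-1}(n_k-n_{k-1})(a-k)$.

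The main technical point is this final combinatorial rearrangement: tracking exactly which pairs $(n,k)$ contribute to the double sum and matching them with the telescoped coefficients $(n_k - n_{k-1})(a-k)$, together with a careful accounting of the boundary contribution $\ell_A(\m/Q) = a-1$ at $n=0$ (which comes from the ``extra'' Koszul sequence $0 \to \OO_X(Z) \to \OO_X^{\oplus 2} \to \OO_X(-Z) \to 0$ not directly covered by Lemma \ref{qI-nrI}).
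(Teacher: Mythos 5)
Your part (1) is correct and is essentially the paper's own argument: the paper picks out the classes of $x^{k+1},\dots,x^{a-1}$ as a $K$-basis of $\overline{\m^{n+1}}/Q\overline{\m^n}$ when $n_k\le n<n_{k+1}$, which is exactly your set $\{x^k : n_k\ge n+1\}$, and your translation $n_k\ge n+1\Leftrightarrow k\ge\lceil a(n+1)/b\rceil$ is right. Likewise, your telescoping of Lemma~\ref{qI-nrI} in part (2) is the paper's, and the identity $p_g(A)-q(\m)=\sum_{n\ge1}\ell_A(\overline{\m^{n+1}}/Q\overline{\m^n})$ is correct exactly as you state it; note that the sum necessarily starts at $n=1$ because the lemma is only valid for $n\ge 1$, and no ``extra Koszul sequence at $n=0$'' is needed --- the telescoped identity is already complete on the cohomological side.

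The genuine gap is the step you defer as ``the main technical point'': it cannot be carried out, because the two quantities you want to equate differ by precisely the boundary term you flag. Writing $v_n:=\ell_A(\overline{\m^{n+1}}/Q\overline{\m^n})$, part (1) says $v_n=a-1-k$ for $n_k\le n<n_{k+1}$ and $v_n=0$ for $n\ge n_{a-1}$, whence $\sum_{k=1}^{a-1}(n_k-n_{k-1})(a-k)=\sum_{k=0}^{a-2}(n_{k+1}-n_k)(a-1-k)=\sum_{n\ge 0}v_n$, which exceeds $\sum_{n\ge1}v_n$ by $v_0=\ell_A(\m/Q)=a-1$. So no Abel-type rearrangement will reconcile your two displayed claims; the correct conclusion of your (correct) telescoping is $q(\m)=p_g(A)-\sum_{k=1}^{a-1}(n_k-n_{k-1})(a-k)+(a-1)$, i.e., statement (2) as printed is off by $a-1$. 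This is corroborated by the $a=2$ example following the proposition, which gives $q(\m)=p_g(A)-(r-1)$ rather than $p_g(A)-r$, and decisively by the rational case $(a,b,c)=(2,3,5)$, where $p_g(A)=0$ and $n_1=1$ would force $q(\m)=-1<0$. The paper's own proof makes the same silent identification $\sum_{k\ge1}v_k=\sum_{k=1}^{a-1}(n_k-n_{k-1})(a-k)$ and carries the same off-by-one; the honest fix is not to ``account for'' the $a-1$ somehow, but to correct the stated formula (equivalently, to index the combinatorial sum so that the $n=0$ block is omitted).
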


\begin{proof}
\par 
Suppose $n_k \le n < n_{k+1}$ for some $0 \le k \le a-2$. 
Then 
$\overline{\m^n}=Q^n+xQ^{n-n_1} +\cdots 
+ x^kQ^{n-n_k}+(x^{k+1})$ 
and $x^{k+1},x^{k+2},\ldots,x^{a-1}$ 
forms a $K$-basis of 
$\overline{\m^{n+1}}/Q\overline{\m^n}$ and thus $\ell_A(\overline{\m^{n+1}}/Q\overline{\m^n})=a-1-k$. 
Hence 
\[
\ell_A(\overline{\m^{n+1}}/Q\overline{\m^n}) 
= \left\{
\begin{array}{ll}
a-1-k & \text{if}\; n_k \le n < n_{k+1},\; k=0,1,\ldots,a-2;  \\[1mm]
0 & \text{if}\; n \ge n_{a-1}. 
\end{array}
\right.
\]
Moreover, one can easily see 
$k=a-\lceil\frac{a(n+1)}{b}\rceil-1$. 

\par \vspace{2mm} \par \noindent
(2) Put $a_n=p_g(A)-q(n\m)$ and $v_n=\ell_A(\overline{\m^{n+1}}/Q\overline{\m^n})$ for every $n \ge 0$. 
Then $a_0=0$ and $\{a_n\}$ is an increasing sequence and $a_{n+1}=a_n$ for sufficiently large $n$. 
By Lemma \ref{qI-nrI}, we have 
\[
0=a_{n+1}-a_n=a_n-a_{n-1}-v_n = \cdots = a_1 - a_0 - \text{\small $\displaystyle{\sum_{k=1}^n} v_k$} 
\]
for sufficiently large $n \ge 1$. 
Hence (1) yields 
\[
p_g(A)-q(\m)=a_1= \sum_{k=1}^n v_k =\sum_{k=1}^{a-1} (n_k-n_{k-1})(a-k),
\]
as required. 
\end{proof}

When $a=2$, one can obtain the following. 

\begin{exam}
Let $A=K[[x,y,z]]/(x^2+y^b+z^c)$ be a Brieskorn hypersurface singularity and 
put $r=\lfloor \frac{b}{2} \rfloor$. Then 
\begin{enumerate}
\item $q(i \m)=\left\{
\begin{array}{ll}
p_g(A)-i(r-1) + \binom{i}{2} & \text{if}\;1 \le i \le r-1; \\[1mm]
p_g(A)-\binom{r}{2} & \text{if}\; i \ge r. 
\end{array}
\right.$ \\
\item The normal Hilbert coefficients of $\m$ are given as follows:
\[
\overline{e}_0(\m)=2, \quad \overline{e}_1(\m)=r, \quad \overline{e}_2(\m)=\binom{r}{2},
\]
where
\[
\ell_A(A/\overline{I^{n+1}}) = \overline{e}_0(I) \binom{n+2}{2} - \overline{e}_1(I)\binom{n+1}{1}+\overline{e}_2(I)
\]
for sufficiently large $n$. 
\end{enumerate}
\end{exam}
\subsection{Geometric genus}

In this subsection, let us consider a graded ring 
\[
B=K[x,y,z]/(x^a+y^b+z^c)
\]
with  $\deg x=q_0=bc$, $\deg y=q_1=ac$ and 
$\deg z=q_2=ab$. 
Put $\m=(x,y,z)A$ and $D=abc$. 
In particular,  the $a$-invariant of $B$ is given by 
$a(B)=D-q_0-q_1-q_2$. 
Also we have that $A=\widehat{B_{\m}}$ is the completion of the local ring $B_{\m}$. 
Then we can calculate $p_g(A)$ using this formula. 

\begin{lem} \label{pg-formula}
Under the above notation, we have 
\[
p_g(A)=\sum_{i=0}^{a(B)} \dim_{K} B_i =\sharp\{(t_0, t_1,t_2) \in \bbZ_{\ge 0}^{\oplus 3} \,|\, D - q_0-q_1-q_2 \ge q_0t_0+q_1t_1+q_2t_2\}
\]
\end{lem}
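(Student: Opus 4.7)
The plan is to reduce the computation of $p_g(A)$ to graded local cohomology of $B$, then apply graded local duality, and finally count monomials. First, since $A=\widehat{B_{\mathfrak m}}$ is the completion of $B_{\mathfrak m}$ and completion does not affect the geometric genus, I may replace $A$ by $B_{\mathfrak m}$. For the graded two-dimensional Cohen--Macaulay ring $B$ (with $\mathfrak m=(x,y,z)B$ its irrelevant maximal ideal) which has an isolated singularity only at $\mathfrak m$, Pinkham's formula identifies
\[
p_g(A)\;=\;\sum_{n\ge 0}\dim_K\bigl[H^2_{\mathfrak m}(B)\bigr]_n .
\]
This follows from identifying $[H^2_{\mathfrak m}(B)]_n\cong H^1(Y,\mathcal O_Y(n))$ where $Y=\Proj(B)$, together with the standard computation of $R^1\pi_\ast\mathcal O_{\tilde X}$ for the blow-up $\pi\colon \tilde X\to \Spec B$ of the vertex, whose graded pieces in non-negative degree contribute to $p_g$.

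Next I apply graded local duality. The hypersurface $B=K[x,y,z]/(x^a+y^b+z^c)$ is Gorenstein, so its canonical module satisfies $K_B\cong B(a(B))$ with $a(B)=D-q_0-q_1-q_2$. Local duality then yields
\[
\bigl[H^2_{\mathfrak m}(B)\bigr]_n\;\cong\;\bigl([K_B]_{-n}\bigr)^{\vee}\;=\;\bigl(B_{a(B)-n}\bigr)^{\vee},
\]
which vanishes once $n>a(B)$. Summing over $n\ge 0$ and reindexing by $i=a(B)-n$ gives the first equality
\[
p_g(A)\;=\;\sum_{i=0}^{a(B)}\dim_K B_i .
\]

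For the second equality it suffices to count monomials. Because $q_0,q_1,q_2>0$ we have $a(B)=D-q_0-q_1-q_2<D$, so the defining relation $x^a+y^b+z^c$ (which is homogeneous of degree $D$) produces no identifications among monomials of weighted degree $\le a(B)$. Consequently, for each $0\le i\le a(B)$,
\[
\dim_K B_i\;=\;\#\bigl\{(t_0,t_1,t_2)\in\Z_{\ge 0}^{\oplus 3}\,\bigl|\,q_0t_0+q_1t_1+q_2t_2=i\bigr\},
\]
and summing over $i=0,\ldots,a(B)$ gives exactly the cardinality of the set on the right-hand side of the lemma. The main obstacle is step one, i.e.\ the Pinkham-type identification $p_g(A)=\sum_{n\ge 0}\dim_K[H^2_{\mathfrak m}(B)]_n$, which must be applied carefully because $Y=\Proj(B)$ sits in a weighted projective plane and may not be smooth as a scheme; this is handled either by passing to a resolution of $Y$ (and checking that the higher contributions cancel, as $B$ is graded Cohen--Macaulay) or by invoking the graded version of the formula directly. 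The remaining steps are formal applications of local duality and monomial counting.
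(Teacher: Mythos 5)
Your proof is correct and is the standard derivation of this formula: Pinkham's formula $p_g(A)=\sum_{n\ge 0}\dim_K[H^2_{\mathfrak m}(B)]_n$ for quasi-homogeneous normal surface singularities, graded local duality for the Gorenstein hypersurface $B$ with $K_B\cong B(a(B))$, and the observation that $B_i$ coincides with the degree-$i$ part of the weighted polynomial ring for $i\le a(B)<D$; the paper states the lemma without proof, taking exactly this well-known argument for granted. The one point requiring care, which you correctly flag, is that Pinkham's formula must be invoked in its graded (Demazure $\mathbb{Q}$-divisor) form because $\Proj(B)$ lies in a weighted projective plane, but this is covered by the standard references on filtered rings with star-shaped resolution.
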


We can find many examples of Brieskorn hypersurface with 
$p_g(A)=p$ for a given $p \ge 1$  if $\nr(\m)=1,2$. 

\begin{exam}
Let $p \ge 1$ be an integer. 
\begin{enumerate}
\item If $A=\mathbb{C}[[x,y,z]]/(x^2+y^3+z^{6p+1})$, then $p_g(A)=p$ and $\nr(\m)=\br(\m)=1$.  
\item If $A=\mathbb{C}[[x,y,z]]/(x^2+y^4+z^{4p+1})$, then $p_g(A)=p$ and $\nr(\m)=\br(\m)=2$.  
\end{enumerate}
\end{exam}

\begin{exam}
Let $k \ge 1$ be an integer. 
\begin{enumerate}
\item Put $A=\mathbb{C}[[x,y,z]]/(x^2+y^6+z^{10k+i})$ for $i=0,1,\ldots,9$. 
Then $\nr(\m)=\br(\m)=3$ and 
\[
p_g(A)=\left\{
\begin{array}{ll}
6k, & \text{(if $i=0,1,2$)}; \\
6k+1, & \text{(if $i=3,4,5$)}; 
\end{array}
\right.
\qquad 
p_g(A)=\left\{
\begin{array}{ll}
6k+3, & \text{(if $i=6,7,8$)}; \\
6k+4, & \text{(if $i=9,10,11$)}. 
\end{array}
\right.
\]
\item Put $A=\mathbb{C}[[x,y,z]]/(x^2+y^7+z^{14k+i})$ for $i=0,1,\ldots,13$. 
Then $\nr(\m)=\br(\m)=3$ and 
\[
p_g(A)=\left\{
\begin{array}{ll}
9k, & \text{(if $i=0,1,2$)}; \\
9k+1, & \text{(if $i=3,4$)}; \\
9k+2, & \text{(if $i=5$)}; \\
9k+3, & \text{(if $i=6,7,8$)}. 
\end{array}
\right.
\qquad 
p_g(A)=\left\{
\begin{array}{ll}
9k+4, & \text{(if $i=9$)}; \\
9k+5, & \text{(if $i=10,11$)}; \\
9k+6, & \text{(if $i=12,13$)}.
\end{array}
\right.
\]
\end{enumerate}
\end{exam}

We discuss when $p_g(A)=\binom{\nr(\m)}{2}$ holds. 

\begin{prop} \label{boundary-ex}
Let $A=\mathbb{C}[[x,y,z]]/(x^a+y^b+z^c)$ with $2 \le a \le b \le c$. 
Then $p_g(A)=\binom{\nr(\m)}{2}$ if and only if 
one of the following cases: 
\begin{enumerate}
\item[$\bullet$] $(a,b,c)=(2,2,n)$ $(n \ge 1)$.   In this case,   $\nr(A)=\nr(\m)=1$ and $p_g(A)=0$. 
\item[$\bullet$] $(a,b,c)=(2,3,3)$, $(2,3,4)$, $(2,3,5)$.   In this case,   $\nr(A)=\nr(\m)=1$ and $p_g(A)=0$.  
\item[$\bullet$] $(a,b,c)=(2,4,4),(2,4,5),(2,4,6),(2,4,7)$. 
  In this case,    $\nr(A)=\nr(\m)=2$ and $p_g(A)=1$. 
\item[$\bullet$]  $(a,b,c)=(2,2r,2r),(2,2r,2r+1),(2,2r,2r+2) $ $(r \ge 3)$.    In this case,  
$\nr(A)=\nr(\m)=r$ and $p_g(A)=\binom{r}{2} \ge 3$. 
\item[$\bullet$]  $(a,b,c)=(2,2r+1,2r+1),(2,2r+1,2r+2)$ $(r \ge 2)$.  
  In this case,  $\nr(A)=\nr(\m)=r$ and $p_g(A)=\binom{r}{2}$. 
\item[$\bullet$]  $(a,b,c)=(3,3,3),(3,3,4),(3,3,5)$.  In this case,  $\nr(A)=\nr(\m)=2$ and $p_g(A)=1$. 
\item[$\bullet$]  $(a,b,c)=(3,3s+1,3s+1)$.  In this case,   
 $\nr(A)=\nr(\m)=2s$ and $p_g(A)=\binom{2s}{2}$. 
 \item[$\bullet$]  $(a,b,c)=(3,3s+2,3s+2),(3,3s+2,3s+3)$.  In this case,  $\nr(A)=\nr(\m)=2s+1$ and $p_g(A)=\binom{2s+1}{2}$. 
\end{enumerate}
\end{prop}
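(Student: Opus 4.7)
The plan is to isolate the equality case of \thmref{pg-nr} and cross it with the explicit length formula of \proref{qM}(1) and the dimension count of \lemref{pg-formula}. Writing $r = \nr(\m)$ and $v_k = \ell_A(\overline{\m^{k+1}}/Q\overline{\m^k})$, the proof of \thmref{pg-nr} uses only the input $v_k \ge 1$ for $k = 1,\ldots,r-1$; moreover, iterating \lemref{qI-nrI} yields the telescoping identity
\[
p_g(A) - q(r\m) = \sum_{k=1}^{r-1} k\, v_k.
\]
Hence the bound $p_g(A) \ge \binom{r}{2} + q(r\m)$ is saturated precisely when $v_k = 1$ for every $k \in \{1,\ldots,r-1\}$, and then $p_g(A) = \binom{r}{2}$ if and only if furthermore $q(r\m) = 0$.

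Next, \proref{qM}(1) gives $v_k = \max(a - \lceil a(k+1)/b \rceil,\,0)$, which depends only on $(a,b,k)$. The equation $v_k = 1$ amounts to $a-2 < a(k+1)/b \le a-1$, and evaluating this at $k = 1$ (so requiring $r \ge 2$) already forces $b < 2a/(a-2)$. Combined with $a \le b$, this excludes $a \ge 4$ and leaves either $a = 2$ (no further restriction on $b$) or $a = 3$ with $b \in \{3,4,5\}$. The special case $r = 1$ reduces to $p_g(A) = 0$, i.e.\ $A$ is rational, which for Brieskorn hypersurfaces means exactly $(a,b,c) = (2,2,n)$ or one of $(2,3,3),(2,3,4),(2,3,5)$.

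For each admissible pair $(a,b)$ I would then compute $r = \lfloor (a-1)b/a \rfloor$ and enumerate the $c \ge b$ with $p_g(A) = \binom{r}{2}$. By \lemref{pg-formula}, $p_g(A)$ equals the number of $(t_0,t_1,t_2) \in \Z_{\ge 0}^3$ satisfying $bc\,t_0 + ac\,t_1 + ab\,t_2 \le abc - bc - ac - ab$, and this count is monotone non-decreasing in $c$. Hence the admissible values of $c$ form an initial interval $[b,c_0(a,b)]$, whose right endpoint is pinned down by elementary arithmetic in each sub-family. The main obstacle is this case-by-case bookkeeping: for $a = 2$ one organizes by the parity of $b$ (giving the two infinite families indexed by $r$), while for $a = 3$ the three small families $b \in \{3,4,5\}$ must be treated separately; in each family one checks that the lattice-point count equals $\binom{r}{2}$ for precisely the listed values of $c$ and exceeds it at the first excluded $c$.
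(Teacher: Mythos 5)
Your proposal is correct, and it reaches the crucial restriction on $(a,b)$ by a genuinely different route than the paper. The paper works entirely inside the lattice-point formula of \lemref{pg-formula}: writing $(a-1)b=ra+\varepsilon$, it shows that the $\lambda_0=0$ slice of the count already contributes at least $\binom{r}{2}$ points, so equality forces the $\lambda_0\ge 1$ slice to be empty, i.e.\ $\frac{2}{a}+\frac{1}{b}+\frac{1}{c}>1$, whence $a\in\{2,3\}$ and $b\le 5$ when $a=3$; the rest is the same enumeration in $c$ that you describe. You instead isolate the equality case of \thmref{pg-nr} via the telescoping identity $p_g(A)-q(r\m)=\sum_{k=1}^{r-1}k\,v_k$ with $v_k=\ell_A(\ol{\m^{k+1}}/Q\ol{\m^k})$ (valid here because $\br(\m)=\nr(\m)$ by \thmref{Bries-Hyp}, so $v_k=0$ for $k\ge r$ --- worth saying explicitly), and then feed in the closed form for $v_k$ from \proref{qM}(1); since $v_k$ is non-increasing in $k$ and $\ge 1$ for $k\le r-1$, the single condition $v_1=1$ governs saturation of $\sum k v_k\ge\binom{r}{2}$, and it yields the same constraints. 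Your route is more structural, pinpointing exactly where the inequality of \thmref{pg-nr} can be tight, while the paper's stays within the combinatorics of \lemref{pg-formula}; both reduce the remaining work to case-by-case determination of the admissible $c$, and your monotonicity observation correctly justifies that these $c$ form an initial interval. One consequence of your argument deserves emphasis: it forces $b\le 5$ when $a=3$ (consistent with the paper's own inequality $\frac{2}{a}+\frac{1}{b}+\frac{1}{c}>1$), so the last two items of the statement can only occur with $s=1$, i.e.\ $(3,4,4)$ and $(3,5,5),(3,5,6)$; for instance $(3,7,7)$ has $v_1=2$ and indeed $p_g=7>\binom{4}{2}=6$, so the printed families must be read with that restriction. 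This is a point in favor of your approach, not a gap in it.
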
 

\begin{proof}
We give a the proof  of only if part. 
Put $r=\nr(\m)$. 
By  Theorem \ref{Bries-Hyp}, we have $\lfloor \frac{(a-1)b}{a}\rfloor$. 
So we can write $(a-1)b=ra+\varepsilon$, where $\varepsilon$ is 
an integer with $0 \le \varepsilon \le a-1$. 
Now suppose 
\[
abc-bc-ca-ab \ge bc \lambda_0 + ca \lambda_1 + ab \lambda_2. 
\]
Then 
\begin{equation} \label{range}
(ra+\varepsilon)c-ca-ab \ge bc \lambda_0 + ca \lambda_1 + ab \lambda_2. \tag{eq.pg} 
\end{equation}
Suppose $\lambda_0=0$. Then 
\[
\left(r-1+\frac{\varepsilon}{~a~}-\lambda_1 \right)\frac{~c~}{b} 
\ge \lambda_2+1 
\]
and thus $\lambda_1 < r-1+\frac{\varepsilon}{~a~}$. 
By Lemma \ref{pg-formula} and assumption, we have 
\begin{eqnarray*}
\binom{r}{2} = p_g(A) 
&\ge & \sum_{k=0}^{r-1} 
\left\lfloor\left(r-1+\frac{\varepsilon}{~a~}-k \right)\frac{~c~}{b} \right\rfloor \\
&\ge & \sum_{k=0}^{r-2} 
\left\lfloor\left(r-1-k \right)\frac{~c~}{b} \right\rfloor + 
\left\lfloor\frac{\varepsilon}{~a~} \cdot \frac{~c~}{b} \right\rfloor \\
&\ge & \sum_{k=0}^{r-2} 
\left(r-1-k \right) + 
\left\lfloor\frac{\varepsilon}{~a~} \cdot \frac{~c~}{b} \right\rfloor \ge \binom{r}{2}. 
\end{eqnarray*}
Hence $\left(r-1+\frac{\varepsilon}{~a~}-k \right)\frac{~c~}{b} < r-k$ 
for each $k=0,1,\ldots,r-1$. 
Moreover, if $\lambda_0 \ge 1$, then since $\lambda_1=\lambda_2=0$ 
does not satisfy the condition (\ref{range}) by Theorem \ref{pg-nr}, we get 
\[
abc-bc-ca-ab < bc, \quad \text{that is,}\quad 
\frac{2}{~a~} + \frac{~1~}{b}+\frac{~1~}{c} > 1. 
\]
This implies $a=2,3$. 
If $a=2$, then $\varepsilon =0,1$. If $a=3$, then 
$\varepsilon = 0,1,2$. 
\par
Now suppose $a=3$ and $\varepsilon =2$. 
Then as $2b=3r+2$, we can write $r=2s$, $b=3s+1$, where 
$s \ge 1$. 
Moreover, the condition holds true if and only if 
$(2s+\frac{2}{3}-1)\frac{c}{3s+1}< 2s$. 
This means $c < 3s+1 + \frac{3s+1}{6s-1}$. 
Hence $c=3s+1$ because $c \ge b=3s+1$. 
Similarly, easy calculation yields the required assertion. 
\end{proof}

\subsection{Weighted dual graph}\label{ss:WDG}
In this subsection, let us explain how to construct 
the weighted dual graph of the minimal good resolution of singularity $X \to \Spec A$ for 
a Brieskorn hypersurface singularity $A=K[[x,y,z]]/(x^a+y^b+z^c)$. 
Though it is obtained in \cite{K-N}, we use the notation of \cite{MO} which studies complete intersection singularities of Brieskorn type.
Let $E$ be the exceptional set of $X \to \Spec A$ and 
$E_0$ the central curve with genus $g$ and $E_0^2=-c_0$. 
We define positive integer $a_i$,$\ell_i$,$\alpha_i$, $\lambda_i$, $\widehat{g_i}$ (i=1,2,3), $\widehat{g}$ and $\ell$ as follows:
\[
\begin{array}{lll}
a_1 =a, & a_2=b, & a_3=c, \\[1mm]
\ell_1 = \lcm(b,c), & \ell_2 = \lcm(a,c), & 
\ell_3 = \lcm(a,b), \\[1mm]
\alpha_1 = \frac{a_1}{(a_1,\ell_1)}, & 
\alpha_2 = \frac{a_2}{(a_2,\ell_2)}, &
\alpha_3 = \frac{a_3}{(a_3,\ell_3)}, \\[2mm]
\lambda_1 = \frac{\ell_1}{(a_1,\ell_1)}, &
\lambda_2 = \frac{\ell_2}{(a_2,\ell_2)}, &
\lambda_3 = \frac{\ell_3}{(a_3,\ell_3)}, \\[2mm]
\hat g_1=(b,c), & \hat g_2=(a,c), & 
\hat g_3=(a,b).  
\end{array}
\]
We put $\widehat{g}=\frac{abc}{\lcm(a,b,c)}$ and 
$\ell=\lcm(a,b,c)$, and define integers $\beta_i$ 
by the following condition:
\[
\lambda_i\beta_i+1 \equiv 0 \pmod{\alpha_i},\quad 
0 \le \beta_i < \alpha_i. 
\]
Then $E_0$ has 
$\hat g_1+\hat g_2 +\hat g_3$ branches. 
For each $w=1,2,3$, we have $\hat g_w$ branches
\[
B_w : \quad E_{w,1} - E_{w,2} - \cdots - E_{w,s_w},
\]
where $E_{w,j}^2=-c_{w,j}$ and 
\[
\frac{\alpha_w}{\beta_w} = [[c_{w,1},c_{w,2},\ldots,c_{w,s_w}]] 
\] 
is a Hirzebruch-Jung continued fraction if $\alpha_w \ge 2$; we regard $B_w$ empty if $\alpha_w=1$. 
Moreover, we have 
\[
2g-2=\hat g- \sum_{w=1}^{3} 
\hat g_w, \qquad 
c_0= \sum_{w=1}^{3} \frac{\hat g_w \beta_w}{\alpha_w}
+ \dfrac{\hat g}{\ell}. 
\]
\par
For instance, if $(a,b,c)=(3,4,7)$, then 
\[
\begin{array}{llllll}
a_1 =3, &  a_2=4, & a_3=7, &
\ell_1 = 28, & \ell_2 = 21, & 
\ell_3 = 12, \\[1mm]
\alpha_1 = 3, & 
\alpha_2 = 4, &
\alpha_3 = 7, &
\lambda_1 = 28, &
\lambda_2 = 21, &
\lambda_3 = 12, \\[2mm]
\beta_1 = 2, & 
\beta_2 = 3, &
\beta_3 = 4, &
\hat g_1=1, & \hat g_2=1, & 
\hat g_3=1, \\[2mm]
\hat g=1, & \ell=84. & & & &  
\end{array}
\]
Thus $g=0$ and $c_0=2$. 
Therefore, each irreducible component of $E$ is a rational curve, and the weighted dual graph of $E$ is represented as in \figref{f:347}, where the vertex  \rule{2mm}{2mm} has weight $-4$ and other vertices
$\bullet$ have  weight $-2$.
\vspace{3mm}
\begin{figure}[htbp] \begin{center}
\setlength{\unitlength}{0.5cm}
    \begin{picture}(11,3)(0,-1)
\put(0,0){\makebox(0,0){\rule{2mm}{2mm}}}
\multiput(2,0)(2,0){5}{\ten}
\multiput(6,2)(2,0){2}{\ten}
\put(0,0){\line(1,0){10}}
\put(4,0){\line(1,1){2}}
\put(6,2){\line(1,0){2}}
\put(0,0){\makebox(0,1.5){$B_3$}}
\put(10,0){\makebox(0,1.5){$B_2$}}
\put(8,2){\makebox(0,1.5){$B_1$}}
\end{picture}
\caption{\label{f:347}}
  \end{center}
\end{figure}

See \cite[4.4]{MO} for more details.


\section{Brieskorn hypersurfaces with elliptic singularities}

We use the notation of \ssref{ss:WDG}.
Let $Z_E$ denote the fundamental cycle.

We call $p_f(A):=p_a(Z_E)$ the {\em fundamental genus} of $A$.
The singularity $A$ is said to be {\em elliptic} if $p_f(A)=1$.
We have the following.

\begin{thm}
[{\cite{o.h-ell}}]
\label{t:ellnr}
If $p_f(A)=1$, then $\br(A)=2$.
\end{thm}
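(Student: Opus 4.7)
The plan is to prove the theorem by reducing $\br(A) \le 2$ to a cohomological stability property on a resolution and then invoking the structure theory of elliptic singularities. Since $p_f(A) = 1 > 0$, the ring $A$ is not rational, so by Example~1.1(1) we have the easy lower bound $\br(A) \ge 2$. It therefore suffices to prove $\br(I) \le 2$ for every $\m$-primary integrally closed ideal $I \subset A$.

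To reformulate this, observe that by \lemref{qI-nrI} the equality $\overline{I^{n+1}} = Q\overline{I^n}$ is equivalent to the triple $q((n-1)I), q(nI), q((n+1)I)$ being an arithmetic progression. Since $\{q(kI)\}_{k \ge 0}$ is a non-increasing sequence of non-negative integers that is eventually constant, the condition $\br(I) \le 2$ is equivalent to $\{q(nI)\}_{n \ge 1}$ being constant, i.e., $q(I) = q(nI)$ for every $n \ge 1$. The geometric reformulation then comes from fixing a resolution $f \colon X \to \Spec A$ with $I\cO_X = \cO_X(-Z)$ for an anti-nef cycle $Z > 0$ and analysing the short exact sequence
\[
0 \to \cO_X(-(n+1)Z) \to \cO_X(-nZ) \to \cO_Z(-nZ) \to 0.
\]
The surjection $H^1(\cO_X(-(n+1)Z)) \twoheadrightarrow H^1(\cO_X(-nZ))$ has kernel equal to $\Coker\bigl(H^0(\cO_X(-nZ)) \to H^0(\cO_Z(-nZ))\bigr)$ and cokernel $H^1(\cO_Z(-nZ))$, so the equality $q(nI) = q((n+1)I)$ holds exactly when the restriction $H^0(\cO_X(-nZ)) \to H^0(\cO_Z(-nZ))$ is surjective and $H^1(\cO_Z(-nZ)) = 0$. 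The goal therefore becomes verifying both for every $n \ge 1$.

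The main obstacle is this cohomological vanishing on an elliptic singularity. The plan is to exploit the defining property $p_a(Z_E) = 1$, which in the elliptic case bounds $p_a$ of arbitrary positive cycles, together with the elliptic sequence stratification of $p_g(A)$, to establish both conditions. A direct comparison of $\cO_Z(-nZ)$ with its restriction to the fundamental cycle $Z_E$, using that $Z \cdot Z_E \le 0$ since $Z$ is anti-nef, should reduce the issue to a statement on $Z_E$ where $\chi(\cO_{Z_E}) = 0$ provides strong control. In the minimally elliptic case $p_g(A) = 1$ the inequality of \thmref{pg-nr} already forces $\nr(A) \le 2$ directly; but for general elliptic $A$ with large $p_g(A)$, \thmref{pg-nr} only yields $\nr(A) = O(\sqrt{p_g(A)})$, which is far too weak. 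Hence the crux of the argument is the genuine use of the elliptic structure rather than the $p_g$ bound, and this cohomological step is where I expect the real work to lie.
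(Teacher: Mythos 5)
The paper does not actually prove this statement: it is imported verbatim from \cite{o.h-ell}, so there is no in-paper argument to compare against. Judged on its own terms, your proposal establishes only the easy half and a standard reduction, and leaves the actual content of the theorem unproved. The lower bound $\br(A)\ge 2$ and the equivalence ``$\br(I)\le 2$ iff $q(nI)$ is constant for $n\ge 1$'' (via \lemref{qI-nrI} and the fact that the non-increasing sequence $q(nI)$ is eventually constant, so constant second differences force constant, hence zero, first differences) are fine. But the theorem is precisely the assertion that $h^1(\cO_X(-Z))=h^1(\cO_X(-nZ))$ for \emph{every} anti-nef cycle $Z$ representing an integrally closed $\m$-primary ideal and every $n\ge 1$, and your treatment of this stops at ``this is where I expect the real work to lie.'' That is the whole theorem; what precedes it is bookkeeping.

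Two specific problems with the sketched strategy. First, from the long exact sequence of $0 \to \cO_X(-(n+1)Z) \to \cO_X(-nZ) \to \cO_Z(-nZ) \to 0$ one gets $q(nI)-q((n+1)I)=h^1(\cO_Z(-nZ))-\dim\Coker\bigl(H^0(\cO_X(-nZ))\to H^0(\cO_Z(-nZ))\bigr)$, so your two conditions are sufficient for $q(nI)=q((n+1)I)$ but not equivalent to it; ``exactly when'' is wrong. Second, and more seriously, the hoped-for vanishing $h^1(\cO_Z(-nZ))=0$ genuinely fails in the elliptic case in general: when the elliptic sequence has length $>1$ the minimally elliptic cycle $C$ can satisfy $Z\cdot C=0$, so $\cO_C(-nZ)$ has degree $0$ on a cycle of arithmetic genus $1$ and may have $h^1=1$; likewise the surjectivity of $H^0(\cO_X(-nZ))\to H^0(\cO_Z(-nZ))$ is a nontrivial global-generation statement. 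Handling exactly this degenerate behaviour via the elliptic sequence is the substance of \cite{o.h-ell}, and ``$\chi(\cO_{Z_E})=0$ provides strong control'' does not substitute for it. As it stands the proposal is an honest plan, not a proof.
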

\begin{rem}
By \cite{CharRat}, $\nr(A)=1$ if and only if $A$ is rational.
Therefore, $\nr(A)=\br(A)$ if $\br(A)=2$.
\end{rem}


It is natural to ask whether the converse of \thmref{t:ellnr} holds or not.
In the following, we classify Brieskorn hypersurface singularities with $p_f(A)=1$ or $\br(A)=2$ as an application of results in Section 3.
Before doing that, we need the following formula 
of $p_f(A)$ in the case of Brieskorn hypersurfaces. 
Put $\alpha=\alpha_1\alpha_2\alpha_3$. 

\begin{lem}[\textrm{\cite{tomaru-pfBH}, \cite[Theorem 1.7]{K-N}, 
\cite[5.4]{MO}}]\label{t:pf}
If $\lambda_3 \le \alpha$, then $-Z_E^2
=\hat g_{3}\ce{\lambda_3/\alpha _{3}}$ and 
\begin{align*}
p_f(A)&=\frac{1}{2}\lambda_3 \left\{\hat g
-\frac{(2\ce{\lambda_3/\alpha_{3}}-1)\hat g_{3}}{\lambda_3}
-\frac{\hat g_1}{\alpha_1}-\frac{\hat g_2}{\alpha_2}\right\}+1 \\
&=\frac{1}{2}\left(ab-a-b-(2\ce{\lambda_3/\alpha_{3}}-1)(a,b)\right)+1.
\end{align*}
%
\end{lem}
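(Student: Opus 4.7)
The plan is to compute the fundamental cycle $Z_E$ explicitly on the star-shaped resolution graph from \ssref{ss:WDG} and then apply the standard genus formula
\[
p_a(Z_E) = 1 + \tfrac{1}{2}\bigl(Z_E^2 + K_X \cdot Z_E\bigr),
\]
where $K_X$ is a canonical divisor on $X$.

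First I would determine the multiplicities of $Z_E$. On each branch $B_w = E_{w,1} - \cdots - E_{w,s_w}$ the Hirzebruch--Jung continued fraction $\alpha_w/\beta_w = [[c_{w,1},\ldots,c_{w,s_w}]]$ forces a unique antinef extension once the multiplicity on $E_{w,1}$ is prescribed, since the condition $Z_E \cdot E_{w,s_w} \le 0$ at the far end pins down all the ratios in terms of $\alpha_w$ and $\beta_w$. The coefficient $m_0$ on $E_0$ is then determined by the antinef condition at $E_0$, namely $-c_0 m_0 + \sum_w \hat g_w m_{w,1} \le 0$, together with the minimality of $Z_E$. The hypothesis $\lambda_3 \le \alpha = \alpha_1\alpha_2\alpha_3$ is exactly what ensures that the third branch is the binding one, giving $m_0 = \lambda_3$ and the corresponding multiplicities along each $B_w$.

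Next, since $Z_E$ is antinef, $-Z_E^2 = \sum_i m_i \cdot (-Z_E \cdot E_i)$ reduces to a sum over components where $Z_E \cdot E_i < 0$. Under the description above all branch components satisfy $Z_E \cdot E_{w,j} = 0$, so the only contribution comes from the central curve, and a short computation yields $-Z_E^2 = \hat g_3 \lceil \lambda_3/\alpha_3 \rceil$. I would then expand $K_X \cdot Z_E = \sum_i m_i(-E_i^2 - 2 + 2g(E_i))$ via adjunction: the central curve contributes a term involving $2g - 2 = \hat g - \sum_w \hat g_w$ and $c_0 = \sum_w \hat g_w \beta_w/\alpha_w + \hat g/\ell$, while each branch contributes $\sum_j m_{w,j}(c_{w,j}-2)$, which telescopes via the continued-fraction identity. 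Assembling these pieces and substituting into $p_a(Z_E) = 1 + \tfrac{1}{2}(Z_E^2 + K_X \cdot Z_E)$ gives the first displayed formula.

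Finally, to obtain the second, more concrete formula I would substitute $\hat g_3 = (a,b)$ and use the elementary identities
\[
\lambda_3 \hat g = ab, \qquad \frac{\lambda_3 \hat g_1}{\alpha_1} = b, \qquad \frac{\lambda_3 \hat g_2}{\alpha_2} = a,
\]
which follow directly from the definitions of $\ell_w, \alpha_w, \lambda_w, \hat g, \hat g_w$ together with $\mathrm{lcm}(a,b,c) = \mathrm{lcm}(a,b)\cdot c / (c,\mathrm{lcm}(a,b))$. The main obstacle is the first step: correctly reading off the multiplicities of $Z_E$ from the continued-fraction data and verifying that the inequality $\lambda_3 \le \alpha$ is precisely the regime in which $m_0 = \lambda_3$ and the stated formula for $-Z_E^2$ hold. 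Once this combinatorial input is secured, the remainder is routine bookkeeping with adjunction and gcd/lcm identities among $a, b, c$.
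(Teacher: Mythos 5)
The paper offers no proof of this lemma at all: it is quoted verbatim from Tomaru, Konno--Nagashima \cite[Theorem 1.7]{K-N} and Meng--Okuma \cite[5.4]{MO}, so there is no internal argument to compare yours against. Your overall strategy --- compute the fundamental cycle $Z_E$ on the star-shaped graph of \ssref{ss:WDG} and apply $p_a(Z_E)=1+\frac{1}{2}\left(Z_E^2+K_X\cdot Z_E\right)$ together with adjunction and the continued-fraction telescoping --- is exactly the route taken in those references, so the plan is sound in outline.

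As written, however, this is a plan rather than a proof, and it contains one concrete error. First, the step you yourself flag as ``the main obstacle'' --- reading off the multiplicities of $Z_E$ from the data $\alpha_w,\beta_w,\lambda_w$ and showing that $\lambda_3\le\alpha$ is exactly the regime in which $m_0=\lambda_3$ --- is the entire mathematical content of the lemma; deferring it leaves essentially nothing established. Second, your claim that all branch components satisfy $Z_E\cdot E_{w,j}=0$, so that $-Z_E^2$ is contributed by the central curve alone, is false. Take $(a,b,c)=(2,3,5)$, the $E_8$ graph: there $\lambda_3=6\le\alpha=30$, $m_0=\lambda_3=6$, $Z_E\cdot E_0=0$, and the unique component with $Z_E\cdot E<0$ is the far end $E_{3,s_3}$ of the branch $B_3$, which has multiplicity $\lceil\lambda_3/\alpha_3\rceil=2$ and $Z_E\cdot E_{3,s_3}=-1$, giving $-Z_E^2=2$. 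This is precisely where the factor $\hat g_3\lceil\lambda_3/\alpha_3\rceil$ comes from: since $\alpha_3$ need not divide $\lambda_3$, the coefficients along $B_3$ are round-ups and the deficiency accumulates at the ends of the $\hat g_3$ branches of type $B_3$ (or at $E_0$ itself only when those branches are empty). Your closing identities $\lambda_3\hat g=ab$, $\lambda_3\hat g_1/\alpha_1=b$, $\lambda_3\hat g_2/\alpha_2=a$ are correct and do reduce the first displayed formula to the second, but the core of the lemma --- the determination of $Z_E$ and of the locus where $Z_E\cdot E_i<0$ --- is still missing, and the one intermediate claim you do commit to about that locus is wrong.
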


\par
We are now ready to state our result in the case of 
$p_f(A)=1$. 

\begin{thm} \label{elliptic}
$(A,\m)$ is elliptic $($i.e. $p_f(A)=1$$)$ if and only if  $(a, b, c)$ is one of the following.
\begin{enumerate}
\item $(2,3,c)$, $c\ge 6$.
\item $(2,4,c)$, $c\ge 4$.
\item $(2,5,c)$, $5\le c \le 9$.
\item $(3,3,c)$, $c\ge 3$.
\item $(3,4,c)$, $4\le c \le 5$.
\end{enumerate}
\end{thm}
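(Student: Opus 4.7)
\medskip
\noindent\textbf{Proof plan.} The plan is to convert the condition $p_f(A)=1$ into an explicit Diophantine equation via \lemref{t:pf} and then enumerate its (finitely many) solutions by a bounded case analysis. First, observe that
\[
\frac{\lambda_3}{\alpha_3}=\frac{\lcm(a,b)}{c},\qquad \hat g_3=(a,b).
\]
Substituting into the formula of \lemref{t:pf} and equating $p_f(A)$ to $1$ yields
\begin{equation*}
ab-a-b=(2m-1)(a,b),\qquad m:=\ce{\lcm(a,b)/c}\ge 1.
\end{equation*}
Write $d=(a,b)$, $a=da'$, $b=db'$ with $(a',b')=1$ and $a'\le b'$; dividing by $d$ transforms the condition into
\begin{equation*}
da'b'-a'-b'=2m-1. \tag{$\ast$}
\end{equation*}
Before the enumeration, one has to check in each candidate triple that the hypothesis $\lambda_3\le\alpha$ of \lemref{t:pf} is satisfied; this is immediate because the triples that survive the analysis have small $\lcm(a,b)$.

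Next, extract a uniform bound on $m$. From the definition of the ceiling, $(m-1)c<\lcm(a,b)\le mc$, so for $m\ge 2$ we have $c<da'b'/(m-1)$. Combined with $c\ge b=db'$ this gives $m-1<a'$, i.e.\ $a'\ge m$; using $c\ge a$ in the same way yields $b'\ge m$. Hence the search reduces to coprime pairs $(a',b')$ with $m\le a'\le b'$ solving $(\ast)$ for some $d\ge 1$.

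The enumeration proceeds by cases. For $m=1$, $(\ast)$ reads $(d-1)b'=2$ when $a'=1$, and $b'(2d-1)=3$ when $a'=2$; this yields $(a,b)\in\{(3,3),(2,4),(2,3)\}$, with $c$ ranging freely in $[\lcm(a,b),\infty)$, giving families (4), (2), (1). For $m=2$, the constraint $a'\ge 2$ forces $b'(2d-1)=5$ (when $a'=2$), so only $(a,b)=(2,5)$ survives, and the ceiling condition $\lcm(2,5)/c\in(1,2]$ gives $5\le c\le 9$, which is family~(3). For $m=3$, $a'\ge 3$ forces $b'(3d-1)=8$ (when $a'=3$), yielding $(a,b)=(3,4)$ and $\lcm(3,4)/c\in(2,3]$, i.e.\ $c\in\{4,5\}$, which is family~(5). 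For $m\ge 4$, the constraint $a',b'\ge m$ combined with coprimality and $(\ast)$ reduces to auxiliary equations such as $b'(dm-1)=3m-1$ (when $a'=m$) whose right-hand side is too small to admit integer solutions with $d\ge 1$ and $b'\ge m\ge 4$; one checks directly that none of the few nearby cases works either. The ``if'' direction is then the routine verification that each listed triple satisfies $(\ast)$ for an appropriate $m$.

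The main obstacle is making the case analysis for $m\ge 4$ airtight: the combinatorial constraints become sparse but one has to systematically combine coprimality with the inequalities $a',b'\ge m$ to exclude every remaining pair. This is the only delicate step; the small-$m$ enumeration is short and mechanical, and the substitution direction of the theorem is immediate.
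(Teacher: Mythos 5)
Your route is genuinely different from the paper's. The paper first invokes \thmref{t:ellnr} ($p_f(A)=1\Rightarrow\br(A)=2$) together with \thmref{Bries-Hyp} to force $\lfloor(a-1)b/a\rfloor\le 2$, which cuts the problem down to the five pairs $(a,b)\in\{(2,3),(2,4),(2,5),(3,3),(3,4)\}$ before \lemref{t:pf} is ever used; you instead feed \emph{every} triple directly into \lemref{t:pf} and solve the resulting Diophantine equation $da'b'-a'-b'=2m-1$. Your enumeration itself is correct, and the $m\ge 4$ case you flag as delicate is in fact immediate: from $a',b'\ge m$ one gets $da'b'-a'-b'\ge (a'-1)(b'-1)-1\ge (m-1)^2-1=m^2-2m>2m-1$ for $m\ge 4$, so no solutions exist.

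There is, however, a genuine gap in your \lq\lq only if'' direction. \lemref{t:pf} is stated under the hypothesis $\lambda_3\le\alpha$, and you dispose of this by checking it only on \lq\lq the triples that survive the analysis.'' That suffices for the \lq\lq if'' direction, but for the converse you must know $p_f(A)\ne 1$ for every \emph{excluded} triple, and your formula is simply unavailable whenever $\lambda_3>\alpha$. This hypothesis does fail for admissible triples: for $(a,b,c)=(6,10,15)$ one computes $\ell=30$, $\lambda_3=\ell/c=2$ and $\alpha_1=\alpha_2=\alpha_3=1$, so $\lambda_3=2>1=\alpha$ (more generally, $\lambda_3\le\alpha$ is equivalent to $\alpha_3\cdot\gcd(a,b,c)\ge\gcd(a,b)$, which can fail when $\gcd(a,b)>1$ but $\gcd(a,b,c)$ is small). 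As written, your argument says nothing about such triples, so it does not exclude them from being elliptic. This is exactly the issue the paper's detour through $\br(A)=2$ avoids: once only five families of $(a,b)$ remain, the hypothesis of \lemref{t:pf} is easily verified for each (e.g.\ $\gcd(a,b)=1$ or $\alpha_3\gcd(a,b,c)\ge\gcd(a,b)$ by inspection). To repair your version you would need either to verify $\lambda_3\le\alpha$ for all $a\le b\le c$ after a suitable permutation (false in general, as the example shows), or to handle the triples with $\lambda_3>\alpha$ by a separate argument (e.g.\ a different formula for $p_f$ from \cite{tomaru-pfBH} or \cite{K-N}, or a lower bound on $p_f$ for those triples).
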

\begin{proof}
If $A$ is elliptic, then by \thmref{t:ellnr} and \thmref{Bries-Hyp},
we have $\lfloor \frac{(a-1)b}{a} \rfloor=2$.
Thus possible pairs $(a, b)$ are:
\[
(2,3), (2,4), (2,5), (3,3), (3,4).
\]
We know that $A$ is rational if $(a,b,c)=(2,3,c)$ with $3\le c \le 5$.
We obtain the assertion by the Lemma \ref{t:pf}; 
for example, $p_f(A)=3-\ce{10/c}$ for $(a,b,c)=(2,5,c)$, and  $p_f(A)=4-\ce{12/c}$ for $(a,b,c)=(3,4,c)$.
\end{proof}

\par
We can classify Brieskorn hypersurface singularities 
with $\br(A)=2$ except $(a,b,c)=(3,4,6)$ or $(3,4,7)$.

\begin{prop}\label{p:br=2}
$\br(A)=2$ if and only if $p_f(A)=1$, except $(a,b,c)=(3,4,6)$, or $(3,4,7)$.
\end{prop}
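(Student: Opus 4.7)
The ``if'' direction, $p_f(A)=1 \Rightarrow \br(A)=2$, is exactly \thmref{t:ellnr}, so the content is the converse: assuming $\br(A)=2$, one must show $p_f(A)=1$ for every admissible triple $(a,b,c)$ outside $\{(3,4,6),(3,4,7)\}$.

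The first move is to restrict the candidate triples. Since $\br(A)\ge \br(\m)$, the hypothesis $\br(A)=2$ combined with \thmref{Bries-Hyp} forces $\lfloor(a-1)b/a\rfloor \le 2$. Together with $2\le a\le b\le c$, this leaves only
\[
(a,b) \in \{(2,2),\,(2,3),\,(2,4),\,(2,5),\,(3,3),\,(3,4)\}.
\]
Moreover, $\br(A)=2\ne 1$ together with Example~1.1(1) implies $A$ is not rational. Comparing against \thmref{elliptic}, the only triples with $(a,b)$ in this set that are neither rational nor elliptic are $(2,5,c)$ with $c\ge 10$ and $(3,4,c)$ with $c\ge 6$. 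Hence the proof reduces to verifying $\br(A)>2$ for each such triple other than $(3,4,6)$ and $(3,4,7)$.

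For each of these remaining triples, I aim to produce an integrally closed $\m$-primary ideal $I$ with $\nr(I)\ge 3$, since then $\br(A)\ge \nr(A)\ge \nr(I)\ge 3$, contradicting $\br(A)=2$. By \lemref{qI-nrI}, this is equivalent to choosing an anti-nef cycle $Z$ on a resolution $f\colon X \to \Spec A$ for which the triple $q(I_Z),\,q(I_{2Z}),\,q(I_{3Z})$ fails to be arithmetic, i.e.\ $\ell_A(\overline{I_Z^{3}}/Q\overline{I_Z^{2}})>0$ for a minimal reduction $Q$ of $I_Z$. The explicit weighted dual graph described in \ssref{ss:WDG} lets one compute the cohomology lengths $h^1(\mathcal{O}_X(-nZ))$ combinatorially from intersection numbers.

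The hardest part is the explicit construction of $Z$. I expect that for the infinite families $(2,5,c)$ with $c\ge 10$ and $(3,4,c)$ with $c\ge 8$ a near-uniform choice of cycle suffices (for example a small anti-nef cycle supported near the central curve or a long end branch), with the strict inequality $q(I_Z)-2q(I_{2Z})+q(I_{3Z})>0$ verifiable by a direct intersection-number computation on the dual graph. For $(3,4,6)$ and $(3,4,7)$ the dual graphs are too small and too symmetric for any simple candidate to realize $\nr(I_Z)\ge 3$, which is why these two triples are left outside the statement.
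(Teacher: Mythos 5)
Your reduction is sound and matches the paper's: the ``if'' direction is \thmref{t:ellnr}, and $\br(\m)=\lfloor(a-1)b/a\rfloor\le 2$ together with the classification in \thmref{elliptic} leaves exactly the families $(2,5,c)$, $c\ge 10$, and $(3,4,c)$, $c\ge 8$, for which one must produce an ideal with $\br(I)\ge 3$. But that production is the entire content of the proposition beyond the reduction, and you do not carry it out: you only say you ``expect'' a suitable anti-nef cycle $Z$ to exist and that the convexity $q(I_Z)-2q(I_{2Z})+q(I_{3Z})>0$ should be ``verifiable by a direct intersection-number computation.'' As written this is a plan, not a proof; moreover the weighted dual graphs of \ssref{ss:WDG} vary with $c$, so your cohomological route would require a uniform computation over two infinite families that you have not set up. (A minor additional imprecision: $\ell_A(\overline{I^3}/Q\overline{I^2})>0$ gives $\br(I)\ge 3$ directly, which is all you need, but to conclude $\nr(I)\ge 3$ as you state one must also rule out $\overline{I^{n+1}}=Q\overline{I^n}$ for $n=0,1$.)

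The paper closes this gap without touching the resolution. It takes the parameter ideal $Q=(y,z^2)$ and $J=\overline{Q}$. For $(2,5,c)$ with $c\ge 10$, the element $xz$ satisfies $(xz)^2=-(y^5+z^c)z^2\in Q^6=(Q^3)^2$, so $xz\in\overline{Q^3}=\overline{J^3}$, while $xz\notin Q\supseteq Q\overline{J^2}$; hence $\overline{J^3}\ne Q\overline{J^2}$ and $\br(A)\ge 3$. For $(3,4,c)$ with $c\ge 8$ the same works with $x^2z$, since $(x^2z)^3=(y^4+z^c)^2z^3\in Q^9=(Q^3)^3$. This is the kind of explicit witness your argument is missing; without it (or a completed version of your cycle computation), the converse direction is not established.
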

\begin{proof}
It suffices to check whether $\nr(A)\ge 3$ for singularities with $\br(\m)= 2$ and $p_f(A)\ge 2$.
\par
Suppose $(a,b,c)=(2,5,c)$, $c\ge 10$. Let $Q=(y,z^2)$ and $J=\ol Q$. Then $xz\not\in Q$ and $(xz)^2=(y^5+z^c)z^2\in Q^6=(Q^3)^2$. Hence $\nr(J)\ge 3$.
\par 
Next suppose that $(a,b,c)=(3,4,c)$, $c\ge 8$.
Let $Q=(y,z^2)$ and $J=\ol Q$, again. Then $x^2z\not\in Q$ and $(x^2z)^3=(y^4+z^c)^2z^3\in Q^9=(Q^3)^3$. Hence $\nr(J)\ge 3$.
\end{proof}

Applying the result of \cite{MO}, we can show that the formula for $\br(\m)$ for Brieskorn  complete intersection singularities.
Thus the statement above can be extended to those singularities.

\begin{rem}
Supose that $p_g(A)=3$.
It follows from \thmref{pg-nr} and its proof that $\nr(I)=3$ if and only if $q(I)=1$ and $q(nI)=0$ for $n\ge 2$.
In particular, $q(nI)=q(I)$ for $n\ge 2$ if $q(I)\ge 2$.
\end{rem}

\begin{rem}
If $(a,b,c)=(3,4,6)$ or $(3,4,7)$,  we have the following.
\begin{enumerate}
\item $p_g(A)=3$, $p_f(A)=2$, $h^1(\cO_X(-Z_E))=1$.
\item There exists a point $p\in E$ such that $\m\cO_X=\cI_p\cO_X(-Z_E)$, where $\cI_p\subset \cO_X$ is the ideal sheaf of the point $p$; so $\m=H^0(\cO_X(-Z_E))$, but $\m$ is not represented by $Z_E$.
Note that $H^0(\cO_X(-nZ_E))\ne \ol{\m^n}$.
On the other hand, $\cO_X(-2Z_E)=\cO_X(K_X)$ is generated by global sections. 
By the vanishing theorem, $h^1(\cO_X(-nZ_E))=0$  for $n\ge 2$.
\end{enumerate}
\end{rem}


\providecommand{\bysame}{\leavevmode\hbox to3em{\hrulefill}\thinspace}
\providecommand{\MR}{\relax\ifhmode\unskip\space\fi MR }
\providecommand{\MRhref}[2]{%
  \href{http://www.ams.org/mathscinet-getitem?mr=#1}{#2}
}
\providecommand{\href}[2]{#2}

\end{document}